\numberwithin{equation}{section}
\def\e{\epsilon}
\def\bfX{\mathbf{X}}
\def\cL{\mathcal{L}}
\def\cH\mathcal{H}
\def\cA{\mathcal{A}}
\def\R{\mathbb{R}}
\def\T{\mathbb{T}}
\def\E{\mathbb{E}}
\DeclareMathOperator{\supp}{supp}
\DeclareMathOperator{\var}{var}
\newtheorem{theorem}{Theorem}[section]
\newtheorem{lemma}{Lemma}[section]
\newtheorem{remark}{Remark}[section]
\newtheorem{proposition}{Proposition}[section]
\newtheorem{assumption}{Assumption}[section]
\numberwithin{equation}{section}
\begin{document}

\title{Fluctuation suppression and enhancement in interacting particle systems}

\date{}

\author[2]{Jiaheng Chen \thanks{sjtuchenjiaheng@sjtu.edu.cn}}
\author[1]{Lei Li\thanks{leili2010@sjtu.edu.cn}}
\affil[1]{School of Mathematical Sciences, Institute of Natural Sciences and MOE-LSC, Shanghai Jiao Tong University, Shanghai, 200240, P. R. China}
\affil[2]{Zhiyuan College, Shanghai Jiao Tong University, Shanghai, 200240, P. R. China}

\maketitle

\begin{abstract}
We investigate in this work the effects of interaction on the fluctuation of empirical measures. The systems with positive definite interaction potentials tend to exhibit smaller fluctuation compared to the fluctuation in standard Monte Carlo sampling while systems with negative definite potentials tend to exhibit larger fluctuation. Moreover, if the temperature goes to zero, the fluctuation for positive definite kernels in the long time tends to vanish to zero, while the fluctuation for negative definite kernels in the long time tends to blow up to infinity. This phenomenon may gain deeper understanding to some physical systems like the Poisson-Boltzmann system, and may help to understand the properties of some particle based variational inference sampling methods.
\end{abstract}

\section{Introduction}

The interacting particle systems are ubiquitous in natural sciences  \cite{frenkel2001understanding, birdsall2004,schlick2010molecular,li2020some}, in biological sciences \cite{vicsek1995novel,bertozzi12,degond2017coagulation} and social sciences  \cite{hasimple2009,albi2013,motsch2014}. 
We consider in this work the following first order ODE/SDE systems for $N$ interacting particles
$X_i\in \bfX$ $i=1,\cdots, N$ in the mean field scaling \cite{stanley1971, georges1996, lasry2007}
\begin{gather}\label{eq:particlesystem}
dX_i= -\nabla V(X_i)dt
-\frac{1}{N}\sum_{j=1}^N \nabla W(X_i-X_j)dt
+\sqrt{2\beta^{-1}}\,dB_i,\quad i=1,\cdots, N,
\end{gather}
where $W$ is the interaction potential, $V$ is the potential for some external field and
$\{B_i\}_{i=1}^N$ are i.i.d. standard Brownian motions (or Wiener process).
$\beta^{-1}$ is the temperature of the heat bath in which these particles are placed.
We assume the state space $\bfX$ is the $d$-dimensional Euclidean space $\R^d$ or the torus $\T^d$. Here, the side length of $\T^d$ is assumed to be $2\pi$ for convenience, i.e., $\T^d=[0, 2\pi]^d$ with periodic boundary condition. 
We remark that the first order systems are rich enough in applications \cite{bertozzi12,degond2017coagulation,li2020some}
and can also be viewed as the overdamped limit (see for example \cite{freidlin2004some,hottovy2015smoluchowski}) of some second order systems (like Langevin dynamics \cite{schlick2010molecular} and the one in \cite{cucker2007emergent}).

As the number of particles $N\to\infty$, the many particle system \eqref{eq:particlesystem} can be approximated by the mean-field approximation \cite{kac1956,mckean1967,liuyang2016,jabinquantitative,li2019mean}. In particular, the  empirical measure
\begin{gather}
\mu_N:=\frac{1}{N}\sum_{i=1}^N \delta(x-X_i(t))
\end{gather}
can be shown to converge in some sense to the solution of the nonlinear Fokker-Planck equation
\begin{gather}\label{eq:nonfp}
\partial_t\mu
=\nabla\cdot((\nabla V+\nabla W*\mu)\mu)+\beta^{-1}\Delta\mu.
\end{gather}
A popular research topic is to justify this limit and the related propagation of chaos property rigorously \cite{meleard1987,sznitman1991,serfaty2020mean,jabinquantitative,li2019mean}. 

In the plasma, the interaction kernel $\Phi$ is the Coulomb interaction between electrons \cite{lazarovici2017mean,carrillo2019propagation}.
When the interaction potential is the  Coulomb potential $W=\Phi\propto \frac{1}{r^{d-2}}$ for $d\ge 3$, which is positive definite, the justification of the mean field limit is challenging. In \cite{serfaty2020mean,bresch2019mean}, the method of the modulated energy has been adopted to show the convergence, namely, almost surely
\[
 \|\mu_N-\mu\|_{\Phi}^2 \to 0, \quad N\to\infty,
\] 
where 
\[
\|f\|_{\Phi}^2=\iint_{\bfX\times\bfX} f(x)\Phi(x-y)f(y)dx dy.
\]
In the Coulomb case, $\|f\|_{\Phi}$ corresponds to the $H^{-1}$ norm of $f$.

Now that $\mu_N$ converges to $\mu$, another question goes to the fluctuation of $\mu_N$ in the large $N$ regime.
Recently,  Chen et. al.  \cite{chen2020dynamical} viewed the training process of certain two layer neural networks as the interacting particle systems. They discovered some dynamical central limit properties for training, which seems to be smaller than the direct Monte Carlo method.  Motivated by their work, we are interested in the statistical properties of $\mu_N$ in the general particle systems under the thermal equilibrium.

Let us explain the problem briefly here.  The distribution of the particles in the mean field limit (i.e., $N\to\infty$)  under the thermal equilibrium is given by the minimizer of the free energy functional (see
\cite{leble2017large,chafai2014first})
\begin{gather}\label{eq:freeenergy}
\begin{aligned}
F(\mu) &:= E(\mu)+\beta^{-1}H(\mu) \\
&=\frac{1}{2}\iint_{\bfX\times \bfX} W(x-y) \mu(dx)\mu(dy)+\int_{\bfX} V(x)\mu(dx)+\beta^{-1}H(\mu),
\end{aligned}
\end{gather}
where
\begin{gather}
E(\mu)=\frac{1}{2}\iint_{\bfX\times \bfX} W(x-y)\mu(dx)\mu(dy)+\int V(x)\mu(dx)
\end{gather}
is the energy and 
\begin{gather}
H(\mu)=
\begin{cases}\int \rho(x)\log \rho(x) dx, & \mu(dx)=\rho(x)dx,\\
+\infty, & \text{otherwise}
\end{cases}
\end{gather}
is the entropy. 
  The minimizer of \eqref{eq:freeenergy} can be shown to exist and be unique under certain conditions imposed on $V$ and the interaction potential $\Phi$ (see, for example, \cite{carrillo2003kinetic,guillin2019uniform}). The minimizer $\mu_*$ is a stationary solution of the nonlinear Fokker-Planck equation  \eqref{eq:nonfp} (see the discussion in Appendix \ref{app:F_minimizer}).
Clearly, in the mean field limit, if a particle has initial position drawn from this equilibrium $\mu_*$ and evolves according to the mean field SDE
\begin{gather}\label{eq:meanfieldsde}
dX=-\nabla V(X)\,dt-\nabla W*\mu(X)\,dt+\sqrt{2\beta^{-1}}\,dB,
\end{gather}
its distribution will be the same as $\mu_*$. Moreover, if there are $N$ particles $\bar{X}_i(0)$ drawn independently from $\mu_*$ and they evolve according to this SDE, then they will be independent from each other for any $t>0$. This means that these particles can be viewed as the Monte Carlo samplings from $\mu_*$ for every $t$.

Now that $\|\mu_N- \mu\|_{\Phi}\to 0$ almost surely and also this estimate can be uniform in time under some conditions \cite{guillin2021uniform,delarue2021uniform,rosenzweig2021global}, the $N$ particles $\{X_i\}_{i=1}^N$ can be viewed as the samples from $\mu_*$ for large $t$. 
What we are interested in is that how $\mu_N$ will compare to the direct Monte Carlo sampling. In particular, we aim to investigate whether the fluctuation in $\mu_N$ will exhibit any interesting statistical difference from the fluctuation in
\begin{gather}
\bar{\mu}_N=\frac{1}{N}\sum_{i=1}^N \delta(x-\bar{X}_i),
\end{gather}
where $\bar{X}_i(t)$'s are $N$ i.i.d. copies of \eqref{eq:meanfieldsde}.

The fluctuation in the $N\to\infty$ limit
\begin{gather}\label{eq:fluceta}
\eta :=\lim_{N\to\infty} \sqrt{N}(\mu_N-\mu)
\end{gather}
and 
\begin{gather}\label{eq:flucmean}
\bar{\eta} :=\lim_{N\to\infty} \sqrt{N}(\bar{\mu}_N-\mu)
\end{gather}
can be shown to exist, where the limit is understood in the weak sense (see, for example, \cite{wang2021gaussian}). 
Though the mean field limit points out that $\mu_N$ and $\bar{\mu}_N$ will be close if $N$ is large. We expect, however, that the fluctuations $\eta_t$ and $\bar{\eta}_t$ will be different. Motivated by the modulated energy approach in \cite{serfaty2020mean,bresch2019mean}, we assume that the interaction kernel is definite (either positive definite or negative definite, see \eqref{eq:phipd} below)
\[
W=\pm \Phi, \quad \Phi \text{ is positive definite},
\]
and compare  $ \|\eta\|_{\Phi}$
with $\|\bar{\eta}\|_{\Phi}$. A typical model with negative definite interaction kernel is the Keller-Segel model \cite{keller1970initiation, horstmann20031970}. For the system with noise, the approach in \cite{chen2020dynamical} using the flow mapping is not accessible. Instead, we make use of the SPDE that the fluctuation satisfies to perform the discussion.

The rest of the paper is organized as follows. In section \ref{sec:setup}, we introduce the basic setup of the problem and derive the main equations for the fluctuations. Using the set of eigenfunctions of the corresponding Fokker-Planck generator, we reduce the equations to a system of equations of Volterra type. In section \ref{sec:mainresult}, we present and prove our main results. Roughly speaking, we show that if the interaction potential is positive definite, the fluctuation in the interacting particle systems is suppressed, and if the interaction potential is negative definite, the fluctuation is enhanced. Moreover, when the temperature is low, the fluctuation suppression and enhancement phenomena are more obvious. We conclude the work and make a discussion in section \ref{sec:dis}.

\section{Setup and the governing equations}\label{sec:setup}

Introduce the effective potential
\begin{gather}
U(x, t)=V(x)+W*\mu=\frac{\delta E}{\delta\mu}.
\end{gather}
Under certain conditions on $V$ and $W$, the fluctuation $\eta_t$ defined in \eqref{eq:fluceta} exists and satisfies the following stochastic partial differential equation (SPDE) (see, for example, \cite{wang2021gaussian, fernandez1997hilbertian, ito1983distribution})
\begin{gather}\label{eq:eta_pde}
\partial_t\eta=\nabla\cdot(\nabla U(x, t)\eta)+\sigma \Delta\eta
+\nabla\cdot(\nabla W*\eta \mu_t)-\sqrt{2\beta^{-1}}\nabla\cdot(\sqrt{\mu_t} \xi),
\end{gather}
where $\xi$ is a space-time white noise so that
\[
\E \xi(x, t)\otimes \xi(y,  s)=I_{d\times d}\delta(x-y)\delta(s-t)
\]
 and $\mu_t$ is the solution to the mean field nonlinear Fokker-Planck equation \eqref{eq:nonfp}. This SPDE characterizes the asymptotic behavior in the $N\to\infty$ limit of the fluctuation, and should be understood in the weak sense. Moreover, it applies to systems with some singular interaction kernels like the point vortex model approximating the 2D incompressible Navier-Stokes equation and the 2D Euler equation \cite{wang2021gaussian}. We will always assume the conditions on $V$ and $W$ such that this SPDE is well-posed as in the notion in \cite{wang2021gaussian}.


Similarly, for the fluctuation of the mean field SDE \eqref{eq:meanfieldsde}, $\bar{\eta}_t$ defined in \eqref{eq:flucmean} satisfies the following SPDE:
\begin{gather}\label{eq:bar_eta_pde}
\partial_t\bar{\eta}=\nabla\cdot(\nabla U(x, t)\bar{\eta})+\beta^{-1} \Delta\bar{\eta}-\sqrt{2\beta^{-1}}\nabla\cdot(\sqrt{\mu_t} \xi).
\end{gather}

Clearly, the properties of the fluctuation crucially rely on the transition probability (Green's function)
of the following {\it linear} Fokker-Planck equation
\begin{gather}
\partial_t p=\nabla\cdot(\nabla U(x, t) p)+\beta^{-1} \Delta p=: \cL_t^*(p).
\end{gather}
In particular, let $G(x, t, y, s)$ be the solution to 
\begin{gather}\label{eq:G}
\begin{split}
& \partial_t G(x, t, y, s)=\cL_{t,x}^* G=\nabla\cdot(\nabla U(x, t) G)+\beta^{-1} \Delta G, \quad t>s,\\
& G(x, s, y, s)=\delta(x-y).
\end{split}
\end{gather}
Here, the subindex $x$ means that the operation is acted on the argument in $G$ where $x$ occupies.

Assuming that for the two fluctuation systems, we start with the same fluctuation $\eta_0$.
In other words, we can imagine like this: we get the samples $X_i(0)=\bar{X}_i(0)$
for all $i=1,\cdots, N$ and then let them evolve according to different dynamics, one by the interacting system \eqref{eq:particlesystem} and one by the mean-field SDE \eqref{eq:meanfieldsde}.  The fluctuations will be $\eta_t$ and $\bar{\eta}_t$ respectively as $N\to\infty$, with the same initial fluctuation $\eta_0$. The fluctuations are given respectively by the Duhamel's principle by
\begin{multline}\label{eq:eta}
\eta_t=\int_{\bfX} G(x, t, y, 0)\eta_0(dy)
+\int_0^t\int_{\bfX} G(x, t, y, s)[-\sqrt{2\beta^{-1}}\nabla\cdot(\sqrt{\mu_s} \xi)](dy) ds\\
+\int_0^t \int_{\bfX} G(x, t, y, s)\nabla\cdot(\nabla W*\eta_s \mu_s)(dy) ds,
\end{multline}
and
\begin{gather}\label{eq:etabar}
\bar{\eta}_t=\int_{\bfX} G(x, t, y, 0)\eta_0(dy)
+\int_0^t\int_{\bfX} G(x, t, y, s)[-\sqrt{2\beta^{-1}}\nabla\cdot(\sqrt{\mu_s} \xi)](dy) ds.
\end{gather}

Below, we will assume that $\mu_t=\mu_*$ which is the minimizer of the functional $F(\mu)$ and the stationary solution of the nonlinear Fokker-Planck equation so that the system is in the thermal equilibrium.  In this case, $\bar{\eta}_t$ is a stationary process and for each $t$, the fluctuation is the same as the one for the Monte Carlo sampling, which is a Gaussian random field by the central limit theorem.  We first review some basic properties of the Green's function in subsection \ref{subsec:green}. Then we derive the basic equations for the fluctuations in subsection \ref{subsec:basic_eq}. Some discussion on the case without noise is made in subsection \ref{subsec:discussion_without_noise}. In subsection \ref{subsec:eigen_reduced}, we reduce the equations to a system of equations of Volterra type by using eigen-expansion.

\subsection{Basics of the linear Fokker-Planck equation}\label{subsec:green}

In this subsection, we collect some basic properties of the Fokker-Planck equation that are useful to us later.

\subsection*{The Green's function and backward equation}

Recall the definition of Green's function \eqref{eq:G}. Regarding the variables $(y, s)$, it is well-known that
\begin{gather}\label{eq:Greenback}
\partial_s G+\cL_{s, y} G=0, \quad s<t,
\end{gather}
where 
\[
\cL_{s}=-\nabla U( \cdot, s)\cdot \nabla+\beta^{-1}\Delta
\]
is the adjoint of $\cL_t^*$. The subindex $y$ means that the operation is acted on $y$ ($U$ takes value as $U(y,s)$).
 This is known as the backward Kolmogorov equation of the Green's function.  To see this relation, let us consider the solution to the equation
\[
\partial_{\tau}H(x, t; z, \tau)
+\cL_{\tau,z}^*H(x, t; z,\tau)=0,\tau<t
\quad H(x, t; z, t)=\delta(z-x).
\]
We now identify $H$ with $G$. To do this, we note the equation for $G$ so that
\[
\begin{split}
0&=\int_s^t\int_{\bfX}H(x, t; z, \tau)[\partial_{\tau}G(z, \tau; y, s)-\cL_{\tau, z} G(z, \tau; y, s)]dz d\tau\\
&=\int_{\bfX} G(z,\tau; y, s)H(x, t; z, \tau)|_{s}^t dz-\int_s^t\int_{\bfX} G(\partial_{\tau}H
+\cL_{\tau, z}^* H)dzd\tau\\
&=G(x, t; y, s)-H(x, t; y, s).
\end{split}
\]

Pick a test function $\varphi$. Define
\begin{gather}
u(y, t, s)=\E \varphi(X_t| X_s=y)=\int_{\bfX} \varphi(x)G(x, t, y, s) dx.
\end{gather}
Using the property of $G$ above, one can find that $u$ satisfies the backward Kolmogorov
equation as well
\begin{gather}
\partial_s u+\cL_{s,y} u=0.
\end{gather}

\subsection*{Time homogeneous case}

If $U(x, t)\equiv U(x)$ that is independent of $t$, then  the operators 
\[
\cL^*=\nabla\cdot(\nabla U \cdot)
+\beta^{-1}\Delta, \quad \cL=-\nabla U \cdot \nabla +\beta^{-1}\Delta
\]
are also independent of $t$, and the dynamics is time homogeneous so that
\begin{gather}
G(x, t, y, s)=G(x, t-s; y).
\end{gather}
Consequently, $\partial_t G=-\partial_s G$ and by equations \eqref{eq:G} and \eqref{eq:Greenback}, one has
\begin{gather}\label{eq:derivativeongreen}
\cL_x^*G=\cL_y G.
\end{gather}
This can also be seen formally as follows
\begin{multline*}
\cL_x^*G=\cL_x^* e^{(t-s)\cL_x^*}\delta(x-y)=
e^{(t-s)\cL_x^*}\cL_x^*\delta(x-y)\\
=e^{(t-s)\cL_x^*} \cL_y \delta(x-y)=\cL_y e^{(t-s)\cL_x^*}  \delta(x-y)
=\cL_y G.
\end{multline*}
The second last equality holds because $\cL_x^*$ and $\cL_y$ commute.
This formal verification indeed follows from the fact that $\cL_x^* G-\cL_y G=0$ for $t=s$ and that it satisfies the equation $(\partial_t-\cL_x^*)(\cL_x^* G-\cL_y G)=0$.

The relation \eqref{eq:derivativeongreen} holds only for time homogeneous case.
This relation tells that the backward Kolmogorov equation for $u(y, t)=\E(\varphi(X_t| X_0=y))$ 
now becomes
\begin{multline}\label{eq:backwardeqn}
\partial_t u=\cL_y u=\int_{\bfX} \varphi(x) \cL_y G(x, y, t-0)dx 
=\int_{\bfX} \varphi(x) \cL_x^* G(x, y, t-0)dx=\E \cL\varphi(X_t| X_0=y).
\end{multline}
The last equality is nothing but the Feynman-Kac formula, which can be derived  using It\^o's calculus directly.

Now note that when $U$ is independent of $t$,  $\cL$ is a self-adjoint nonpositive operator 
in $L^2(\mu_*)$:
\begin{gather}
\int_{\bfX} (\cL f)(y) g(y) \mu_*(dy)=\int_{\bfX}  f(y) (\cL g)(y) \mu_*(dy).
\end{gather}
In fact,
\begin{gather}
\cA:=-\cL=\nabla U(x)\cdot \nabla-\beta^{-1}\Delta =-\beta^{-1}e^{\beta U}\nabla\cdot (e^{-\beta U}\nabla).
\end{gather}
One finds that $\cA$ is a nonnegative operator easily.

Our analysis will be made based on the following assumptions on the potential $U$ and the generator
$\cL$:
\begin{assumption}\label{ass:spectralgap}
The generator $\cL=\beta^{-1}e^{\beta U}\nabla\cdot (e^{-\beta U}\nabla)$ has a spectral gap in the sense that
\[
\lambda \var(f)\le D_{\cL}(f)
\]
for some $\lambda>0$, where  $\var(f)=\mu_*( (f-\mu_*(f))^2)$ and
\[
D_{\cL}(f):=\langle-\cL  f,  f\rangle_{\mu_{*}}=\beta^{-1}\langle\nabla f,\nabla f\rangle_{\mu_*}
\]
is the corresponding Dirichlet form.
\end{assumption}

Then $\cA=-\cL$ is a nonnegative self-adjoint operator in $L^2(\mathbb{R}^d;\mu_*)$ with discrete spectrum \cite{pavliotis2014stochastic}. The eigenvalue problem for the generator is 
\[
-\cL \phi_n=\lambda_n\phi_n, \quad n=0,1,...
\]
Notice that $\phi_0=1$ and $\lambda_0=0$. The eigenvalues of the generator are real and nonnegative:
\[
0=\lambda_0<\lambda_1\le \lambda_2 \le \cdots.
\]
Furthermore, the eigenfunctions $\{\phi_j\}_{j=0}^{\infty}$ are real-valued and span $L^2(\mathbb{R}^d;\mu_*)$ in the form of a generalized Fourier series:
\[
f=\sum_{n}\phi_nf_n,\quad f_n=\langle f,\phi_n\rangle_{\mu_*}
\]
with $\langle \phi_n,\phi_m\rangle_{\mu_*}=\delta_{nm}$.

\subsection{The basic equations in the thermal equilibrium}\label{subsec:basic_eq}

Assume that the system is in the thermal equilibrium so that
$\mu_t\equiv \mu_* $, which is a minimizer of \eqref{eq:freeenergy}.  Then
\[
U(x)= V(x)+W*\mu_*(x),
\]
and $\mu_*$ satisfies
\begin{gather}
\mu_*=\rho_*(x)\,dx\propto \exp(-\beta U(x))\,dx.
\end{gather}
See Appendix \ref{app:F_minimizer} for the discussion. Here, $\rho_*(x)$ is the density of $\mu_*$.

By \eqref{eq:eta} and \eqref{eq:etabar}, one has
\begin{gather}
\eta_t=\bar{\eta}_t+\int_0^t \int_{\bfX} G(x, t, y, s)\nabla\cdot(\nabla W*\eta_s \mu_s)(dy) ds.
\end{gather}

As we mentioned, we will consider
\begin{gather}
W=\pm \Phi
\end{gather}
where $\Phi(x- y)$ is a positive semi-definite kernel. This means that for any given $x_i, 1\le i\le m$ and $c_i\in \R$, $1\le i\le m$, one has
\begin{gather}\label{eq:phipd}
\sum_{1\le i, j\le m} c_i c_j \Phi(x_i-x_j)\ge 0.
\end{gather}
In other words, the matrix $(\Phi(x_i-x_j))_{1\le i, j\le m}$ is positive semi-definite 
for any $x_i$. The Fourier transform
of $\Phi$
\begin{gather}
\hat{\Phi}(\omega)=\int_{\bfX} \Phi(x)e^{-i \omega\cdot x}dx, \quad \omega\in \hat{\bfX}
\end{gather}
is nonnegative by the Bochner's theorem \cite{rudin2017fourier}. Here, $\hat{\bfX}$ represents the space for $\omega$.
If $\bfX=\T^d$, then $\hat{\bfX}$ is discrete.
We recall that the inverse Fourier transform is given by
\begin{gather}
\Phi(x)=\frac{1}{(2\pi)^d}\int_{\hat{X}}\hat{\Phi}(\omega) e^{i\omega\cdot x}d\omega.
\end{gather}

\begin{remark}
In the case $\T^d=[0, 2\pi]^d$, 
$\Phi(x)=\frac{1}{(2\pi)^d}\sum_{\omega\in \mathbb{Z}^d}\hat{\Phi}(\omega)e^{i\omega\cdot x}$ so the coefficient in the inverse transform is also $\frac{1}{(2\pi)^d}$.   Hence, we will generally write ``$d\omega$'' as the natural measure on $\hat{\bfX}$.
If $\hat{\bfX}$ is discrete, this is the counting measure. This allows us to treat the different cases uniformly.
\end{remark}

\begin{assumption}\label{ass:kernel}
Assume that $\hat{\Phi}\ge 0$ is not identically zero, and $\hat{\Phi}\in L^1(\hat{\bfX})\cap L^2(\hat{\bfX})$ so that
\begin{gather}
\nu(d\omega):=\frac{1}{(2\pi)^d}\hat{\Phi}(\omega)d\omega
\end{gather}
is a finite measure on $\hat{\bfX}$.
\end{assumption}
Note that this measure $\nu$ corresponds to convolution with $\Phi$. Clearly,  we have
\begin{lemma}\label{eq:phiproperty}
Let Assumption \ref{ass:kernel} hold.  Then,
\begin{enumerate}
\item  $\Phi\in C_b(\bfX) \cap L^2(\bfX)$.

\item The energy norm can be written as
\begin{gather}
\|f\|_{\Phi}^2=\|\hat{f}\|_{L^2(\nu)}^2.
\end{gather}
\end{enumerate}
\end{lemma}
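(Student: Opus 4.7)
The plan is to derive both parts of the lemma from the integrability assumption $\hat{\Phi}\in L^1(\hat{\bfX})\cap L^2(\hat{\bfX})$ via standard Fourier analysis, treating the Euclidean and toroidal cases uniformly through the convention that $d\omega$ denotes Lebesgue measure on $\hat{\bfX}=\R^d$ and counting measure on $\hat{\bfX}=\mathbb{Z}^d$ respectively (as explained in the remark preceding the lemma).

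For part (1), the inverse Fourier transform formula $\Phi(x)=(2\pi)^{-d}\int_{\hat{\bfX}}\hat{\Phi}(\omega)e^{i\omega\cdot x}\,d\omega$ expresses $\Phi$ as the Fourier transform of an $L^1$ function, so I would invoke the Riemann--Lebesgue lemma (or, in the discrete case, uniform convergence of the Fourier series) to get continuity and the sup-norm bound $\|\Phi\|_{\infty}\le (2\pi)^{-d}\|\hat{\Phi}\|_{L^1}$, giving $\Phi\in C_b(\bfX)$. For $\Phi\in L^2(\bfX)$, I would apply Plancherel's theorem, which yields $\|\Phi\|_{L^2(\bfX)}^2=(2\pi)^{-d}\|\hat{\Phi}\|_{L^2(\hat{\bfX})}^2<\infty$ by Assumption \ref{ass:kernel}.

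For part (2), I would rewrite the energy norm as a convolution pairing,
\begin{equation*}
\|f\|_{\Phi}^2=\iint_{\bfX\times\bfX} f(x)\Phi(x-y)f(y)\,dx\,dy=\int_{\bfX} f(x)\,(\Phi*f)(x)\,dx,
\end{equation*}
using that $\Phi$ is real and even (the latter being forced by positive semi-definiteness). Then I would apply Parseval's identity together with the convolution theorem $\widehat{\Phi*f}=\hat{\Phi}\hat{f}$ to obtain
\begin{equation*}
\|f\|_{\Phi}^2=\frac{1}{(2\pi)^d}\int_{\hat{\bfX}}\hat{\Phi}(\omega)\,|\hat{f}(\omega)|^2\,d\omega=\int_{\hat{\bfX}}|\hat{f}(\omega)|^2\,\nu(d\omega)=\|\hat{f}\|_{L^2(\nu)}^2,
\end{equation*}
which is the required identity.

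The only subtle point I anticipate is justifying the Parseval step at the level of generality needed: $f$ will be taken to be a signed measure (the fluctuation field) rather than an $L^2$ function, so $\hat{f}$ is merely a bounded continuous function. However, because $\hat{\Phi}\in L^1$, the measure $\nu$ is finite, and $|\hat{f}|^2$ is bounded, so the integral $\int|\hat{f}|^2\,d\nu$ is unambiguously finite; the identity can then be obtained by a standard approximation argument (mollifying $f$, applying classical Parseval, and passing to the limit using dominated convergence against the finite measure $\nu$). This is the one place where the $L^1$ assumption on $\hat{\Phi}$, as opposed to just $L^2$, really matters.
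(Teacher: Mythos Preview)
Your proposal is correct and follows exactly the approach the paper takes: the paper's entire proof is the sentence ``The first is obvious, while the second claim is a direct application of Plancherel's formula,'' and your argument simply spells out those two steps. Your additional remark about extending the Parseval identity to signed measures via approximation against the finite measure $\nu$ is a nice clarification that the paper does not make explicit.
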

The first is obvious, while the second claim is a direct application of Plancherel's formula \cite{stein2011fourier}.

This motivates us to consider the evolution of the Fourier transform 
of $\eta$:
\begin{gather}
\hat{\eta}_t(\omega)=\int_{\bfX} e^{-i \omega\cdot x}\eta_t(dx),
\quad \hat{\bar{\eta}}_t(\omega)=\int_{\bfX} e^{-i \omega\cdot x}\bar{\eta}_t(dx).
\end{gather}
Both are well-defined since $e^{-i\omega\cdot x}\in C_b^{\infty}$ for any $\omega$.

\begin{proposition}
Both $\hat{\eta}_t$ and $\hat{\bar{\eta}}_t$ are Gaussian stochastic processes. Moreover, $\hat{\bar{\eta}}_t$ is a stationary process with mean zero and variance $1-|\hat{\mu}_*(\omega)|^2$ for each $t$.
They satisfy the relation 
\begin{gather}\label{eq:fluctuationrelation}
\hat{\eta}_t(\omega)=\hat{\bar{\eta}}_t(\omega)\mp
\frac{1}{(2\pi)^d}\int_0^t  \int_{\hat{\bfX}} k(\omega, \omega', t-s)  \hat{\Phi}(\omega') \hat{\eta}_s(\omega') d\omega' ds,
\end{gather}
where ``$-$'' sign corresponds to $W=\Phi$ and ``$+$'' corresponds to $W=-\Phi$ respectively, and
\begin{gather}
k(\omega, \omega', s)=\beta \int_{\bfX}  \left(e^{-\frac{1}{2}s\cA}e^{-i\omega\cdot y}\right)  \cA (e^{-\frac{1}{2}s\cA} e^{i\omega'\cdot y})  \mu_*(dy).
\end{gather}
For each $s$, $k$ is Hermitian with
\begin{gather}
k(\omega, \omega', s)=\overline{k(\omega', \omega, s)}
\end{gather}
and is positive semi-definite in $s$.
\end{proposition}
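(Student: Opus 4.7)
The plan breaks into three pieces matching the three claims of the proposition.

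First, for the Gaussianity and stationary variance of $\hat{\bar\eta}_t$: the SPDE \eqref{eq:bar_eta_pde} is linear with Gaussian forcing, and the initial datum $\eta_0$ is the CLT Gaussian limit of $\sqrt{N}(\bar\mu_N(0)-\mu_*)$; hence $\bar\eta$ is a Gaussian random field, so each $\hat{\bar\eta}_t(\omega)$ is Gaussian. In thermal equilibrium the mean-field SDE \eqref{eq:meanfieldsde} is a time-homogeneous Langevin dynamics preserving $\mu_*$, so the i.i.d.\ copies $\bar X_i(t)$ remain i.i.d.\ $\mu_*$-distributed at every $t$. Writing
\[
\hat{\bar\eta}_t(\omega)=\lim_{N\to\infty}\sqrt{N}\Bigl(\frac{1}{N}\sum_{i=1}^N e^{-i\omega\cdot\bar X_i(t)}-\hat\mu_*(\omega)\Bigr),
\]
the scalar CLT gives mean zero and variance $\mathbb{E}|e^{-i\omega\cdot\bar X_1}|^2-|\mathbb{E} e^{-i\omega\cdot\bar X_1}|^2=1-|\hat\mu_*(\omega)|^2$, and stationarity in $t$ follows from time-translation invariance of the joint law of the trajectories. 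Gaussianity of $\hat\eta_t$ is deferred: once \eqref{eq:fluctuationrelation} is established, $\hat\eta$ solves a linear Volterra equation with Gaussian driver $\hat{\bar\eta}$, hence is itself Gaussian.

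Second, for \eqref{eq:fluctuationrelation}, I would subtract \eqref{eq:etabar} from \eqref{eq:eta} to isolate the extra term $\int_0^t\!\!\int_{\bfX}G(x,t,y,s)\nabla\!\cdot\!((\nabla W*\eta_s)\mu_*)(y)\,dyds$, and apply $\int e^{-i\omega\cdot x}(\cdot)\,dx$ to both sides. Since we are time-homogeneous, the backward equation \eqref{eq:backwardeqn} identifies
\[
\int_{\bfX}e^{-i\omega\cdot x}G(x,t,y,s)\,dx=\varphi(y),\qquad \varphi(y):=\bigl(e^{-(t-s)\cA}e^{-i\omega\cdot\,}\bigr)(y).
\]
An integration by parts in $y$ passes the divergence onto $\varphi$; inserting $W=\pm\Phi$ together with the Fourier inversion $\nabla(\Phi*\eta_s)(y)=\frac{1}{(2\pi)^d}\int i\omega'\hat\Phi(\omega')\hat\eta_s(\omega')e^{i\omega'\cdot y}\,d\omega'$ reduces the problem to computing the inner factor $\int\nabla\varphi\cdot\nabla e^{i\omega'\cdot y}\,\mu_*(dy)$. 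The Dirichlet-form identity $\int\nabla f\cdot\nabla g\,\mu_*(dy)=\beta\int(\cA f)g\,\mu_*(dy)$ converts this into $\beta\int(\cA\varphi)e^{i\omega'\cdot y}\,\mu_*(dy)$. Splitting $e^{-(t-s)\cA}=e^{-(t-s)\cA/2}e^{-(t-s)\cA/2}$ and pushing one factor across $\cA$ via self-adjointness of $\cA$ and of its semigroup symmetrizes the expression into exactly $k(\omega,\omega',t-s)$; the sign $\mp$ in \eqref{eq:fluctuationrelation} is produced by the $\pm$ in $W=\pm\Phi$ combined with the minus from the integration by parts.

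Third, for Hermiticity and positive semi-definiteness of $k$, introduce the real self-adjoint operator $T_s:=\cA^{1/2}e^{-s\cA/2}=e^{-s\cA/2}\cA^{1/2}$. Self-adjointness of $\cA^{1/2}$ on $L^2(\mu_*)$ (applied to the bilinear form $(f,g)\mapsto\int fg\,\mu_*$, which is symmetric because $\cA$ is real) lets me rewrite
\[
k(\omega,\omega',s)=\beta\int_{\bfX}(T_se^{-i\omega\cdot y})(T_se^{i\omega'\cdot y})\,\mu_*(dy).
\]
Since $T_s$ is real, $T_se^{i\omega'\cdot y}=\overline{T_se^{-i\omega'\cdot y}}$, so with $v_\omega:=T_se^{-i\omega\cdot y}$ one gets $k(\omega,\omega',s)=\beta\int v_\omega\,\overline{v_{\omega'}}\,\mu_*(dy)$. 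Hermiticity is immediate, and for any finite $\{c_i\}\subset\mathbb{C}$ and $\{\omega_i\}\subset\hat\bfX$,
\[
\sum_{i,j}c_i\bar c_j\,k(\omega_i,\omega_j,s)=\beta\int_{\bfX}\Bigl|\sum_i c_iv_{\omega_i}\Bigr|^2\mu_*(dy)\ge 0.
\]
The most delicate step is the symmetrization in Step~2: each of the Dirichlet identity, the semigroup splitting and the self-adjointness manipulations is routine on its own, but combining them cleanly so that the result matches the stated symmetric form $k(\omega,\omega',t-s)$ is the one place demanding care; the remainder of the proof is CLT, integration by parts, and Fourier bookkeeping.
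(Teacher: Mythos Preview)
Your proposal is correct and follows essentially the same route as the paper: identify $\int e^{-i\omega\cdot x}G(x,t-s;y)\,dx$ with $e^{-(t-s)\cA}e^{-i\omega\cdot y}$ via the backward equation, convert the divergence term into a generator/Dirichlet-form expression against $\mu_*$, and then symmetrize using self-adjointness of $\cA$ and its semigroup. The only cosmetic difference is that you integrate by parts first so that $\cA$ initially lands on $\varphi=e^{-(t-s)\cA}e^{-i\omega\cdot y}$, whereas the paper uses the identity $\nabla\!\cdot(\nabla\psi\,\rho_*)=\beta(\cL\psi)\rho_*$ so that $\cA$ first lands on $e^{i\omega'\cdot y}$; after the symmetrization step both computations coincide, and your explicit $T_s=\cA^{1/2}e^{-s\cA/2}$ argument for Hermiticity and positive semi-definiteness simply spells out what the paper calls ``easy to find.''
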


\begin{proof}
The claims about $\hat{\eta}(\omega)$ and $\hat{\bar{\eta}}(\omega)$
that they are Gaussians are clear as the equations governing $\eta$ and $\bar{\eta}$ are linear. The statistics of $\hat{\bar{\eta}}(\omega)$ follow from simple calculation.

Let 
\[
u(y, t-s; \omega)=\int_{\bfX} e^{-i\omega\cdot x}G(x, t-s; y)dx.
\]
By \eqref{eq:backwardeqn}, we find that
\begin{gather*}
u(y, t-s; \omega)= e^{(t-s)\cL}e^{-i\omega\cdot y},
\end{gather*}
which is understood as $(e^{(t-s)\cL} g)(y)$ with $g(z)=e^{-i\omega\cdot z}$.

Moreover, 
\begin{gather}
\nabla W*\eta_s=\pm \frac{1}{(2\pi)^d}\nabla_y \int_{\hat{\bfX}}\hat{\Phi}(\omega)e^{i\omega\cdot y}\hat{\eta}_s(\omega) d\omega.
\end{gather}

Since $\mu_s\equiv \mu_*$, one finds that
\begin{gather*}
\hat{\eta}_t(\omega)=\hat{\bar{\eta}}_t(\omega)\pm
\frac{1}{(2\pi)^d}\int_0^t \int_{\bfX} \left(e^{(t-s)\cL}e^{-i\omega\cdot y}\right) \int_{\hat{\bfX}}\hat{\Phi}(\omega')\nabla_y\cdot\left(\nabla_y e^{i\omega'\cdot y}\rho_*(y)\right)\hat{\eta}_s(\omega') d\omega' dy ds.
\end{gather*}
Recall that $\mu_*(dy)=\rho_*(y)dy$, $\rho_*(y)\propto \exp(-\beta U(y))$. 

Note that
\begin{gather}\label{eq:generatorrelation}
\nabla_y \cdot(\nabla_y \varphi(y)\rho_*(y))
=(\Delta_y\varphi-\beta \nabla U \cdot \nabla_y\varphi)\rho_*(y)
=\beta(\cL_y\varphi)\rho_*(y).
\end{gather}
Then, one finds
\begin{gather*}
\hat{\eta}_t(\omega)=\hat{\bar{\eta}}_t(\omega)\pm
\frac{\beta}{(2\pi)^d}\int_0^tds \int_{\hat{\bfX}} d\omega'  \hat{\Phi}(\omega') \hat{\eta}_s(\omega')  \int_{\bfX}  \left(e^{(t-s)\cL}e^{-i\omega\cdot y}\right)  (\cL_y e^{i\omega'\cdot y})  \mu_*(dy).
\end{gather*}

Introducing
\[
k(\omega, \omega', s):=-\beta\int_{\bfX}  \left(e^{s \cL}e^{-i\omega\cdot y}\right)  (\cL_y e^{i\omega'\cdot y})  \mu_*(dy).
\]

Since  $\cA=-\cL$ is self-adjoint and nonnegative in $L^2(\mu_*)$, it follows that
\[
k(\omega, \omega', s):=\beta\int_{\bfX}  \left(e^{-\frac{1}{2}s \cA}e^{-i\omega\cdot y}\right)  \cA (e^{-\frac{1}{2}s \cA} e^{i\omega'\cdot y})  \mu_*(dy).
\]
Using this formula, it is easy to find that this kernel is Hermitian for each $s$
and is positive semi-definite in time.
\end{proof}

Let us introduce the operator
\begin{gather}
Af(\omega, t)=\int_0^t\int_{\hat{\bfX}} k(\omega, \omega', t-s) f(\omega', s)\nu(d\omega') ds,
\end{gather}
which is a Volterra type operator. As a mapping on $L^2(\nu\times ds)$, it is not a symmetric operator.
The equation \ref{eq:fluctuationrelation} becomes
\begin{gather}
(I\mp A)\hat{\eta}_t=\hat{\bar{\eta}}_t.
\end{gather}

\subsection{Discussion on the case without noise}\label{subsec:discussion_without_noise}

If $\beta=+\infty$ or the temperature is zero, then the particles will have definite trajectories given the initial configuration.  For this case, one can study the evolution of the distribution using the flow maps like in \cite{chen2020dynamical}. Instead of using the flow maps, we explore briefly how the fluctuation behaves using the SPDEs for them in this subsection.

The fluctuations $\eta_t$ and $\bar{\eta}_t$ satisfy respectively the equations
\begin{gather}
\begin{split}
\partial_t\eta_t
&= \nabla\cdot(\nabla V \eta)+\nabla\cdot(\nabla W*\mu_t\eta)+\nabla\cdot(\nabla\Phi*\eta \mu_t) \\
&=\nabla\cdot(\nabla U(x, t)\eta)+\nabla\cdot(\nabla\Phi*\eta \mu_t), 
\end{split}
\end{gather}
and
\begin{gather}
 \partial_t\bar{\eta}_t= \nabla\cdot(\nabla V \bar{\eta})+\nabla\cdot(\nabla W*\mu_t\bar{\eta})
=\nabla\cdot(\nabla U(x, t)\bar{\eta}_t),
\end{gather}
where again
$U(x, t)=V+W*\mu_t$.

The Green's function for the corresponding transport equation
\begin{gather}
\partial_t p= \nabla\cdot(\nabla U(x,t) p)=\cL_{t,x}^* p
\end{gather}
is given by
\begin{gather}
G(t, x, s, y)=\delta(x-X(t; y, s)),
\end{gather}
where $X$ is the trajectory of the particle, given by
\begin{gather}
\dot{X}(t;  y, s)= -\nabla U(X, t), \quad X(s; y, s)=y.
\end{gather}
This means that the particle moves from $y$ at time $s$ to $X(t; y, s)$ at time $t$.
The mapping $y\mapsto X(t; y, s)$ is understood as the flow map. 
Clearly, 
\[
\int_{\bfX} G(t, \cdot; y, s )\mu_s(dy)=(X(t; \cdot, s))_{\#}\mu_s.
\]
This means that Green's function automatically gives the pushforward of the measure under $X$.

By the Duhamel's principle, one obtains
\begin{gather}\label{eq:etanonoise}
\begin{split}
\eta_t&=\int_{\bfX} G(t, x, 0, y)\eta_0(dy)+\int_0^t \int_{\bfX} G(t, x, s, y)\nabla\cdot(\nabla W*\eta_s \mu_s) dy ds\\
&=\bar{\eta}_t+\int_0^t \int_{\bfX} G(t, x, s, y)\nabla\cdot(\nabla W*\eta_s \mu_s) dy ds.
\end{split}
\end{gather}
The  term $\bar{\eta}_t$ is the fluctuation for the mean field dynamics and clearly for any test function
\[
\int_{\bfX} \varphi(x)\bar{\eta}_t(dx)=\int_{\bfX} \varphi(X(t; y, 0))\eta_0(dy).
\]

Taking the Fourier transform on both sides, one has
\[
\hat{\eta}_t(\omega)=\hat{\bar{\eta}}_t(\omega)
+\int_0^t \int_{\bfX}e^{-i\omega\cdot X(t; y, s)}\nabla_y\cdot(\nabla W*\eta_s \mu_s)(dy)ds.
\]
It follows that
\begin{multline}\label{eq:etaFTnn}
\hat{\eta}_t(\omega)=\hat{\bar{\eta}}_t(\omega)-\\
\frac{1}{(2\pi)^d}\int_0^t ds\int_{\hat{\bfX}}d\omega'\hat{W}(\omega') \hat{\eta}_s(\omega') \int_{\bfX} e^{-i\omega\cdot X(t; y, s)}\omega\cdot \nabla_y X(t; y,s)\cdot\omega' e^{i\omega'\cdot y} \mu_s(dy).
\end{multline}

Note that $\nabla_y X(t; y, s)$ satisfies the equation
\[
\frac{d}{dt}J_{ts}^y=-\nabla^2U(X(t; y, s))\cdot J_{ts}^y, \quad J_{ss}^y=I.
\]
Hence, $\nabla_y X(t; y,s)\cdot\omega' e^{i\omega'\cdot y}$ is nothing but $J_{ts}^y(\omega' e^{i\omega'\cdot y})$. Hence, \eqref{eq:etaFTnn} agrees with Corollary 3.3 in \cite{chen2020dynamical}.

If $\mu_*$ is a minimizer of 
\[
E(\mu)=\frac{1}{2}\int \mu(dx)W(x-y)\mu(dy)+\int V \mu(dx),
\]
then $\mu_t\equiv \mu_*$ and 
\[
\supp(\mu_*)\subset \mathrm{argmin} (W*\mu_*+V).
\]
The trajectory for $y\in \supp \mu_*$ is $X(t;y, s)\equiv y$ and $\nabla^2 U(y)$
is positive semi-definite. 
\[
J_{t,s}^y=e^{-(t-s)H(y)},\quad H(y)=\nabla^2 U(y).
\]
In this case, the kernel
\[
k(\omega, \omega', s)=\int_{\bfX} e^{-i\omega\cdot y}\omega\cdot e^{-sH(y)}\cdot\omega' e^{i\omega'\cdot y} \mu_s(dy)
\]
is Hermitian for each $s$ and positive semi-definite in time $s$. However, the structure of such case is not clear. Below, we will only focus on $\beta<+\infty$, which has more structure and might be better understood.

\subsection{Reduced system using eigen-expansion}\label{subsec:eigen_reduced}

In this subsection, we assume $\beta<+\infty$ and Assumption \ref{ass:spectralgap} holds for 
$U(x)=V(x)+W*\mu_*$. We first reduce the equation \eqref{eq:fluctuationrelation} to a Volterra integral equation taking values in $\R^{\mathbb{N}}$.

Since $e^{i\omega\cdot y}\in L^2(\mathbb{R}^d;\mu_*)$ for any $\omega$, one may write
\begin{gather}
e^{i\omega \cdot y}=\sum_{j\ge 0}c_j(\omega)\phi_j(y),\quad 
c_j(\omega)=\langle e^{i\omega \cdot y},\phi_j\rangle_{\mu_*}=\int_{\bfX} \phi_j(y)e^{i\omega\cdot y}\mu_*(dy).
\end{gather}

We have the following simple observations:
\begin{lemma}\label{lmm:cj}
\begin{itemize}
\item For any $\omega$
\begin{gather}
\sum_{j\ge 0} |c_j(\omega)|^2=\int_{\bfX} \mu_*(dy)=1.
\end{gather}

\item The family of functions $\{c_j(\omega)\}$ is linearly independent in the sense that if
\[
\sum_j \alpha_j c_j(\omega)=0, \forall \omega,
\]
then $\alpha_j=0$.
\end{itemize}
\end{lemma}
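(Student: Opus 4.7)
The first item is essentially a one-line application of Parseval's identity. Under Assumption \ref{ass:spectralgap}, $\{\phi_j\}_{j \ge 0}$ is an orthonormal basis of $L^2(\bfX; \mu_*)$, and for each fixed $\omega$ the function $y \mapsto e^{i\omega\cdot y}$ belongs to this space with $\|e^{i\omega\cdot y}\|_{L^2(\mu_*)}^2 = \int_{\bfX} 1 \, \mu_*(dy) = 1$, since $|e^{i\omega\cdot y}| = 1$. The numbers $c_j(\omega) = \langle e^{i\omega\cdot y}, \phi_j\rangle_{\mu_*}$ are precisely its generalized Fourier coefficients, so Parseval gives $\sum_j |c_j(\omega)|^2 = 1$ immediately.

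For the linear independence claim, the plan is to rewrite $\sum_j \alpha_j c_j(\omega)$ as a Fourier transform and then invoke injectivity. Given a sequence $\{\alpha_j\}$ (of finite support for the minimal reading of linear independence, or more generally in $\ell^2$), set $g(y) := \sum_j \alpha_j \phi_j(y)$, which lies in $L^2(\mu_*)$ by orthonormality of $\{\phi_j\}$. Swapping summation and integration (trivial for finite sums; for $\ell^2$ sequences justified by $L^2(\mu_*)$-convergence of partial sums and Cauchy--Schwarz against $e^{i\omega\cdot y}$) gives
\[
\sum_j \alpha_j c_j(\omega) = \int_{\bfX} g(y) e^{i\omega\cdot y} \rho_*(y) \, dy.
\]
Since $g \in L^2(\mu_*)$ and $\rho_*$ is an integrable probability density (bounded on $\T^d$, rapidly decaying on $\R^d$), Cauchy--Schwarz yields $g\rho_* \in L^1(dx)$, so the right-hand side is exactly the Fourier transform of $g\rho_*$ evaluated at $-\omega$ in the paper's convention.

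If this quantity vanishes for every $\omega$, injectivity of the Fourier transform on $L^1$ forces $g(y)\rho_*(y) = 0$ almost everywhere. Since $\rho_* \propto e^{-\beta U(y)}$ is strictly positive wherever $U$ is finite, which is everywhere under the standing hypotheses, we conclude $g = 0$ in $L^2(\mu_*)$. Orthonormality of $\{\phi_j\}$ then gives $\alpha_j = \langle g, \phi_j\rangle_{\mu_*} = 0$ for every $j$.

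The only real care is (i) specifying the class of sequences $\{\alpha_j\}$ to which the claim applies and justifying the sum--integral interchange in that class, and (ii) verifying strict positivity of $\rho_*$ so that multiplication by $\rho_*$ loses no information. Both are routine given the assumptions already in place, and the argument degenerates to something entirely trivial for finite linear combinations, which is the minimal reading of ``linearly independent.''
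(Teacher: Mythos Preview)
Your proof is correct. For the first item both you and the paper rely on Parseval (the paper simply asserts it). For the second item your route differs in presentation from the paper's one-line sketch: the paper notes that the system $d_j(\omega) := \int_{\bfX} \phi_j(y)\, e^{i\omega\cdot y}\, dy$ is dual to $\{c_j\}$ in the sense that $\int_{\hat{\bfX}} c_i(\omega)\,\overline{d_j(\omega)}\, d\omega = (2\pi)^d \delta_{ij}$, so pairing a vanishing combination $\sum_j \alpha_j c_j$ against $\overline{d_i}$ isolates $\alpha_i$ directly. You instead identify $\sum_j \alpha_j c_j(\omega)$ as the Fourier transform of $g\rho_*$ with $g=\sum_j \alpha_j \phi_j$, invoke injectivity of the Fourier transform on $L^1$, and then use $\rho_*>0$ together with orthonormality of $\{\phi_j\}$. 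The underlying mechanism is the same (Fourier inversion versus Fourier injectivity), but your version has the minor advantage of not requiring $\phi_j \in L^1(dx)$ for the dual functions $d_j$ to be defined, and it makes the role of strict positivity of $\rho_*$ explicit; the paper's formulation is more compact.
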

The second claim of the lemma follows from the fact that the dual basis
is given by $d_j(\omega)=\int \phi_j(y)e^{i\omega y}dy$. In other words,
\[
\int_{\hat{\bfX}} c_i(\omega) \overline{d_j(\omega)}d\omega=(2\pi)^d\delta_{ij}.
\]

It is natural to introduce the ``matrix" of size $\mathbb{N}\times\mathbb{N}$ with component
\begin{gather}
G_{ij}:=\int_{\hat{\bfX}} c_i(\omega)\bar{c}_j(\omega)\nu (d\omega).
\end{gather}
Here, we consider the operator $G$ defined by
\begin{gather}
(GX)_i=\sum_{j\ge 0} G_{ij}X_j,
\end{gather}
for a sequence $X=(X_0,X_1, \cdots)$.

\begin{proposition}\label{pro:G}
For all $i, j$
\begin{gather}\label{eq:G_ij}
G_{ij}= \iint_{\bfX\times \bfX} \Phi(y-y')\phi_i(y)\phi_j(y')\mu_*(dy)\mu_*(dy')
\in \R.
\end{gather}
The operator $G: \ell^2\to \ell^2$ is positive semi-definite. If moreover $\hat{\Phi}$ has full support in $\hat{X}$, $G$ is positive definite.
\end{proposition}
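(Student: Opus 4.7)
My strategy is to first derive the double-integral formula \eqref{eq:G_ij} by Fourier inversion, and then read off both positivity claims by identifying the matrix $G$ with a self-adjoint convolution-type operator on $L^2(\mu_*)$.

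For the formula I would substitute $c_i(\omega)=\int\phi_i(y)e^{i\omega\cdot y}\mu_*(dy)$ into $G_{ij}=\int c_i(\omega)\bar c_j(\omega)\,\nu(d\omega)$ and apply Fubini. Absolute convergence is immediate: $\hat\Phi\in L^1(\hat{\bfX})$ by Assumption \ref{ass:kernel} and $\phi_i,\phi_j\in L^2(\mu_*)\subset L^1(\mu_*)$ since $\mu_*$ is a probability measure. Rearranging gives
\[
G_{ij}=\iint_{\bfX\times\bfX}\phi_i(y)\phi_j(y')\Bigl[\tfrac{1}{(2\pi)^d}\int_{\hat{\bfX}}\hat\Phi(\omega)e^{i\omega\cdot(y-y')}d\omega\Bigr]\mu_*(dy)\mu_*(dy'),
\]
and the bracketed quantity is exactly $\Phi(y-y')$ by Fourier inversion. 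Realness is then automatic because $\Phi$, $\phi_i$, and $\phi_j$ are all real-valued.

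Next I would introduce the operator $K:L^2(\mu_*)\to L^2(\mu_*)$, $Kg(y):=\int_{\bfX}\Phi(y-y')g(y')\mu_*(dy')$. Since $\Phi\in C_b(\bfX)$ by Lemma \ref{eq:phiproperty}, the estimate $\|Kg\|_\infty\le\|\Phi\|_\infty\|g\|_{L^2(\mu_*)}$ shows $K$ is bounded, and the symmetry $\Phi(y-y')=\Phi(y'-y)$ makes $K$ self-adjoint on $L^2(\mu_*)$. The formula just derived reads $G_{ij}=\langle\phi_i,K\phi_j\rangle_{\mu_*}$, so $G$ is the matrix of $K$ in the orthonormal basis $\{\phi_j\}$ of $L^2(\mu_*)$; in particular $G$ is bounded on $\ell^2$ and shares the positivity properties of $K$. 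For $X\in\ell^2$, setting $g_X:=\sum_j X_j\phi_j\in L^2(\mu_*)$ (so $\|g_X\|_{L^2(\mu_*)}=\|X\|_{\ell^2}$) yields
\[
\langle X,GX\rangle_{\ell^2}=\iint_{\bfX\times\bfX}\Phi(y-y')g_X(y)g_X(y')\mu_*(dy)\mu_*(dy')=\|\widehat{g_X\rho_*}\|_{L^2(\nu)}^2\ge0,
\]
where the second equality is Lemma \ref{eq:phiproperty} applied to $h:=g_X\rho_*\in L^1(\bfX)$ (the latter by Cauchy--Schwarz in $\mu_*$). This settles positive semi-definiteness.

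If additionally $\hat\Phi>0$ on $\hat{\bfX}$ (the full-support case), then $\langle X,GX\rangle_{\ell^2}=0$ forces $\widehat{g_X\rho_*}$ to vanish $\nu$-a.e. and hence Lebesgue-a.e. on $\hat{\bfX}$; Fourier inversion gives $g_X\rho_*\equiv0$, and since $\rho_*>0$ we conclude $g_X=0$ in $L^2(\mu_*)$, so $X=0$ by orthonormality of $\{\phi_j\}$. The main bookkeeping subtlety --- rather than a genuine obstacle --- is verifying that $h=g_X\rho_*$ is admissible for Lemma \ref{eq:phiproperty} and that positivity transfers correctly between $K$ on $L^2(\mu_*)$ and $G$ on $\ell^2$; both come down to routine application of Plancherel and the orthonormal-basis isomorphism.
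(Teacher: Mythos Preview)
Your proposal is correct and follows essentially the same route as the paper: Fourier inversion via Fubini to obtain \eqref{eq:G_ij}, identification of $G$ with the convolution operator $f\mapsto\int\Phi(\cdot-y')f(y')\mu_*(dy')$ on $L^2(\mu_*)$ to get boundedness, and the computation $\langle X,GX\rangle_{\ell^2}=\|g_X\mu_*\|_\Phi^2$ for positive semi-definiteness, with the full-support hypothesis forcing $g_X\rho_*\equiv0$ and hence $X=0$. Your version is slightly more explicit about the Fubini justification and the passage through Plancherel, but the argument is the same.
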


\begin{proof}
By definition, one obtains
\begin{gather*}
\begin{split}
\int_{\hat{\bfX}} c_i(\omega)\bar{c}_j(\omega)\nu (d\omega)&=\int\Big(\int e^{i\omega\cdot y}\phi_i(y)\mu_*(dy)\Big)\Big(\int e^{-i\omega \cdot y'}\phi_j(y')\mu_*(dy')\Big)\nu(d\omega)\\
&=\frac{1}{(2\pi)^d}\iint \Big(\int e^{i\omega\cdot(y-y')}\hat{\Phi}(\omega)d\omega\Big) \phi_i(y)\phi_j(y')\mu_*(dy)\mu_*(dy')\\
&= \iint_{\bfX\times \bfX} \Phi(y-y')\phi_i(y)\phi_j(y')\mu_*(dy)\mu_*(dy')\in \mathbb{R}.
\end{split}
\end{gather*}
The formula for $G_{ij}$ then follows, implying that
\begin{gather*}
    \Phi(y-y')=\sum_{i\ge 0,j\ge 0}G_{ij}\phi_i(y)\phi_j(y'),
\end{gather*}

Then, for any $X\in \ell^2$ it holds that
\[
\begin{split}
\|GX\|_{\ell^2}^2
& =\sum_i |\iint_{\bfX\times \bfX} \Phi(y-y')\phi_i(y)(\sum_j X_j \phi_j(y'))\mu_*(dy')\mu_*(dy)|^2\\
&=\left\|\int_{\bfX} \Phi(\cdot -y')(\sum_j X_j \phi_j(y'))\mu_*(dy')\right\|_{L^2(\mu_*)}^2
\end{split}
\]
Since $\Phi$ is bounded, this is clearly finite. Hence, $G: \ell^2\to \ell^2$.
Moreover, 
\begin{gather*}
\begin{aligned}
 &\langle X, GX\rangle_{\ell^2}= \sum_{i\ge 0}\sum_{j\ge 0}X_iG_{ij}X_j \\
= &\iint_{\bfX\times \bfX} \Big(\sum_{i\ge 0} X_i\phi_i(y)\Big)\Phi(y-y')\Big(\sum_{j\ge 0}X_j\phi_j(y')\Big)\mu_*(dy)\mu_*(dy')\\
= &\Big \|\sum_{i\ge 0} X_i\phi_i \mu_*\Big\|_{\Phi}^2 \ge 0,
\end{aligned}
\end{gather*}
which means that $G$ is positive semi-definite.

If $\hat{\Phi}$ has full support, then $\sum_{i\ge 0} X_i\phi_i \mu_*$ has zero Fourier transform so that (since $\mu_*$ has positive density everywhere)
\[
\sum_{i\ge 0} X_i\phi_i=0.
\]
One then must have $X_i=0$.
\end{proof}

According to Proposition \ref{pro:G}, we find that $G$ is actually the convolution operator
\[
\mathcal{G}f(x):=\int_{\bfX} \Phi(x-y)f(y)\mu_*(dy) 
\]
expressed under the basis $\{\phi_j\}_{j\ge 0}$ in $L^2(\mu_*)$. Note that in the convolution, 
the measure is $\mu_*(dy)$ instead of the Lebesgue measure.
Clearly, the square root of $G$, or $G^{1/2}$, is well-defined operator.

With the introduced family $\{c_j(\omega)\}$, one has the kernel to be
\begin{gather}\label{eq:kernelexpansion}
\begin{split}
k(\omega,\omega',s)&=\beta\int_{\bfX} \Big(\sum_{j\ge 0} e^{-\lambda_j s}\bar{c}_j(\omega)\phi_j(y)\Big)\Big(\sum_{j\ge 0}\lambda_jc_j(\omega')\phi_j(y)\Big)\mu_*(dy)\\
&=\beta\sum_{j\ge 0} \lambda_j e^{-\lambda_j s}\bar{c}_j(\omega)c_j(\omega'),
\end{split}
\end{gather}
where we recall $\lambda_0=0$.

For the convenience, we also introduce
\begin{gather}
\tilde{X}_i(t)=\int_{\bfX} \phi_i(y)\eta_t(dy),
\quad \tilde{Y}_i(t)=\int_{\bfX} \phi_i(y)\bar{\eta}_t(dy).
\end{gather}
These are the fluctuations tested on the eigenfunctions.
The vectors $\tilde{X}:=(\tilde{X}_0, \tilde{X}_1, \cdots)$
and $\tilde{Y}=(\tilde{Y}_0, \tilde{Y}_1,\cdots)$ are expected not to be in $\ell^2$. In fact, it is expected that $\E \tilde{Y}_i^2(t)=1$  for $i\ge 1$. Formally, one has
\[
\sum_i \phi_i(y)\phi_i(y')=\frac{1}{\rho_*(y)}\delta(y-y'),
\quad \sum_i \tilde{Y}_i \phi_i(y)=\frac{\bar{\eta}_t(y)}{\rho_*(y)}.
\]
Hence, one cannot hope $\tilde{Y}$ to be in $\ell^2$.
Even so, we note that 
\begin{gather}
X:=G^{1/2}\tilde{X}, \quad Y:=G^{1/2}\tilde{Y}
\end{gather}
are well-defined quantities.

The following proposition gives the basic properties of these introduced quantities.
\begin{proposition}\label{pro:basicreduced}
\begin{enumerate}[(i)]
\item  Almost surely, $X(t)=G^{1/2}\tilde{X}(t) \in \ell^2$ and 
$Y(t)=G^{1/2}\tilde{Y}(t) \in \ell^2$.

\item It holds that 
\[
\|\eta_t\|_{\Phi}^2=\|\hat{\eta}_t\|_{L^2(\nu)}^2
=\langle X, X\rangle_{\ell^2}=\langle \tilde{X}, G\tilde{X}\rangle_{\ell^2},
\]
and similar relations hold for $\bar{\eta}_t$ and $Y(t)$.

\item Introducing a family of operators $\Lambda(t): \ell^2\to \ell^2$ for $t>0$, defined by $(\Lambda(t) X)_i= \lambda_i e^{-\lambda_i t}X_i$, then the following equation holds
\begin{gather}\label{eq:XY_relation}
    X(t)=Y(t)\mp \beta\int_0^t G^{1/2}\Lambda(t-s) G^{1/2}X(s)ds,
\end{gather}
where ``$-$'' sign corresponds to $W=\Phi$ and ``$+$'' corresponds to $W=-\Phi$ respectively.
\end{enumerate}
\end{proposition}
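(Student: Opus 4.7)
The plan is to establish the three claims in the order (ii), (iii), (i), since (ii) is the algebraic identity that gives meaning to all $\ell^2$-norms appearing in the statement, (iii) is a spectral expansion of the governing Fourier equation \eqref{eq:fluctuationrelation}, and (i) is a quantitative estimate following from (ii) combined with (iii). For (ii), the input is the pointwise basis expansion
\[
\Phi(y-y')=\sum_{i,j\ge 0}G_{ij}\,\phi_i(y)\phi_j(y'),
\]
which appeared implicitly in the proof of Proposition \ref{pro:G}. Substituting this into $\|\eta_t\|_\Phi^2=\iint\Phi(y-y')\eta_t(dy)\eta_t(dy')$ and applying Fubini produces $\sum_{ij}G_{ij}\tilde X_i(t)\tilde X_j(t)$; since $X=G^{1/2}\tilde X$ and $G^{1/2}$ is self-adjoint on $\ell^2$, this coincides with $\langle X,X\rangle_{\ell^2}$. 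The identity $\|\eta_t\|_\Phi^2=\|\hat\eta_t\|_{L^2(\nu)}^2$ is exactly Lemma \ref{eq:phiproperty}(2). The same reasoning applies verbatim to $\bar\eta_t$ and $Y(t)$.

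For (iii), the plan is to re-express \eqref{eq:fluctuationrelation} entirely in the eigenbasis. Since $e^{-i\omega\cdot y}\in L^2(\mu_*)$, one has the $L^2(\mu_*)$-expansion $e^{-i\omega\cdot y}=\sum_j\bar c_j(\omega)\phi_j(y)$. Pairing this with the distribution $\eta_t$ (admissible because the $\phi_j$ are smooth by elliptic regularity for $\cL$) produces the formal series
\[
\hat\eta_t(\omega)=\sum_{j\ge 0}\bar c_j(\omega)\tilde X_j(t),\qquad \hat{\bar\eta}_t(\omega)=\sum_{j\ge 0}\bar c_j(\omega)\tilde Y_j(t).
\]
Plugging these together with the kernel expansion \eqref{eq:kernelexpansion} into \eqref{eq:fluctuationrelation}, the inner $\omega'$-integral collapses to a single matrix element, $\int c_j(\omega')\bar c_i(\omega')\nu(d\omega')=G_{ji}$, yielding
\[
\sum_j\bar c_j(\omega)\bigl(\tilde X_j(t)-\tilde Y_j(t)\bigr)=\mp\beta\sum_j\bar c_j(\omega)\int_0^t\lambda_j e^{-\lambda_j(t-s)}(G\tilde X(s))_j\,ds.
\]
The linear independence of $\{c_j(\omega)\}_j$ established in Lemma \ref{lmm:cj} allows termwise matching of the coefficients of $\bar c_j(\omega)$, giving $\tilde X(t)=\tilde Y(t)\mp\beta\int_0^t\Lambda(t-s)G\tilde X(s)\,ds$. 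Applying $G^{1/2}$ and factoring $G=G^{1/2}G^{1/2}$ inside the integral produces \eqref{eq:XY_relation}.

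For (i), the bound on $Y$ is immediate from (ii) and a direct moment computation: $\E\|Y(t)\|_{\ell^2}^2=\E\|\bar\eta_t\|_\Phi^2=\int(1-|\hat\mu_*(\omega)|^2)\nu(d\omega)\le\nu(\hat{\bfX})<\infty$ by Assumption \ref{ass:kernel}, so $Y(t)\in\ell^2$ almost surely. For $X(t)$ I will combine (ii) with a Gronwall argument on \eqref{eq:XY_relation}, using boundedness of $G$ on $\ell^2$ (Proposition \ref{pro:G}) together with the uniform bound $\int_0^t\lambda_j e^{-\lambda_j(t-s)}\,ds\le 1$ to control the operator norm of $s\mapsto G^{1/2}\Lambda(t-s)G^{1/2}$. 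The principal obstacle is the rigorous interpretation of the series $\hat\eta_t(\omega)=\sum_j\bar c_j(\omega)\tilde X_j(t)$, since $\tilde X(t)$ is not expected to lie in $\ell^2$; only its image under $G^{1/2}$ does. The termwise interchanges in the derivation of (iii) must therefore be justified either by truncating to finitely many eigenmodes and passing to the limit once the $\Phi$-norm control from (ii) is in place, or by testing the SPDE \eqref{eq:eta_pde} componentwise against the smooth eigenfunctions $\phi_j$.
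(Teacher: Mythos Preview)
Your proposal is correct and follows essentially the same route as the paper: the paper also establishes (i)+(ii) via the identification $\langle\tilde X,G\tilde X\rangle_{\ell^2}=\|\eta_t\|_\Phi^2$ (reached there through the Fourier-side computation $\int_{\hat{\bfX}} c_i(\omega)\hat\eta_t(\omega)\,\nu(d\omega)=(G\tilde X)_i=\langle\Phi*\eta_t,\phi_i\rangle_{\mu_*}$ rather than your direct expansion of $\Phi(y-y')$), and then derives (iii) exactly as you do by substituting the kernel expansion \eqref{eq:kernelexpansion} into \eqref{eq:fluctuationrelation} and invoking the linear independence of $\{c_j\}$ from Lemma~\ref{lmm:cj}. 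The paper treats (i) simply as an immediate consequence of the identity in (ii) together with the standing well-posedness assumption on the fluctuation SPDE, without the moment/Gronwall step you outline.
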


\begin{proof}
(i+ii)    On the one hand, it is noted that one has
\begin{gather*}
\begin{aligned}
\int_{\hat{\bfX}} c_i(\omega)\hat{\eta}_t(\omega)\nu(d\omega) &=\int_{\hat{\bfX}}c_i(\omega) \left(\int e^{-i\omega y}\eta_t(dy)\right)\nu(d\omega)\\
 &=\sum_j \int_{\hat{\bfX}}c_i(\omega)\bar{c}_j(\omega)\nu(d\omega) \int_{\bfX} \phi_j(y)\eta_t(dy)\\
 &=\sum_j G_{ij}\int_{\bfX} \phi_j(y)\eta_t(dy)
 =(G\tilde{X})_i.
 \end{aligned}
\end{gather*}
Meanwhile, by direct computation,
\begin{multline*}
\int_{\hat{\bfX}} c_i(\omega)\hat{\eta}_t(\omega)\nu(d\omega)=\int_{\hat{\bfX}}\Big(\int_{\bfX} e^{i\omega\cdot y}\phi_i(y)\mu_*(dy)\Big)\hat{\eta}_t(\omega) \nu(d\omega) \\
=\frac{1}{(2\pi)^d}\int_{\bfX} \Big(\int_{\hat{\bfX}} e^{i\omega\cdot y}\widehat{\Phi*\eta_t}(\omega)d\omega\Big)\phi_i(y)\mu_*(dy)
=\langle\Phi*\eta_t, \phi_i\rangle_{\mu_*},
\end{multline*}
which implies that
\[
\Phi*\eta_t(y)=\sum_i(G\tilde{X})_i\phi_i(y).
\]

Consequently,
\begin{multline*}
  \langle X, X\rangle_{\ell^2}= \langle \tilde{X},G\tilde{X}\rangle_{\ell^2}=\sum_i(G\tilde{X})_i\int\phi_i(y)\eta_t(dy)
    =\int \Big(\sum_i (G\tilde{X})_i\phi_i(y)\Big)\eta_t(dy)\\
    =\iint_{\bfX\times\bfX} \Phi(y-y')\eta_t(dy')\eta_t(dy)
    =\|\eta_t\|_{\Phi}^2=\|\hat{\eta}_t\|_{L^2(\nu)}^2.
\end{multline*}
The last equality is due to Lemma \ref{eq:phiproperty}. 

Clearly, similar relations also hold for $\bar{\eta}_t$ and $Y(t)$
\[
\|\bar{\eta}_t\|_{\Phi}^2=\|\hat{\bar{\eta}}_t\|_{L^2(\nu)}^2
=\langle \tilde{Y}, G\tilde{Y}\rangle_{\ell^2}=\langle Y, Y\rangle_{\ell^2}.
\]

The first two claims then follow.

(iii)  Recall \eqref{eq:fluctuationrelation} and \eqref{eq:kernelexpansion}. One can derive that
\begin{gather}\label{eq:eta_bar_eta}
\begin{split}
\hat{\eta}_t(\omega)&=\hat{\bar{\eta}}_t(\omega)\mp \beta\int_0^t \sum_{j\ge 0} \lambda_j e^{-\lambda_j(t-s)}\bar{c}_j(\omega)(G\tilde{X}(s))_jds\\
&=\hat{\bar{\eta}}_t(\omega)\mp \beta\int_0^t \sum_{j\ge 0} \bar{c}_j(\omega)(\Lambda(t-s)G\tilde{X}(s))_jds.
\end{split}
\end{gather}

Now, by the definition of $c_j(\omega)$, one notes that
\[
\hat{\eta}_t(\omega)=\int_{\bfX}\sum_{j\ge 0}\bar{c}_j(\omega)\phi_j(y)\eta_t(dy)
=\sum_{j\ge 0}\bar{c}_j(\omega)X_j.
\]
Similar expression goes for $\hat{\bar{\eta}}_t$. Using the linear independence in Lemma \ref{lmm:cj}, one has
\[
\tilde{X}_j(t)=\tilde{Y}_j(t)\mp \beta\int_0^t (\Lambda(t-s) G\tilde{X}(s))_jds.
\]
Note that $\int_0^t \Lambda(t-s) G\tilde{X}(s)ds\in \ell^2$. Hence, it is well-defined to
consider acting $G^{1/2}$ on both sides to obtain
\[
X=Y\mp \beta\int_0^t G^{1/2}\Lambda(t-s) G^{1/2} X(s)ds,
\]
which holds in $\ell^2$.
\end{proof}

\section{The main results}\label{sec:mainresult}

Due to the property of $k(\omega,\omega',s)$, we expect that when $W$ is positive definite, the fluctuation in the interacting particle system is smaller, while when $W$ is negative definite, the fluctuation in the system is larger.

We first establish the statistical properties of $Y(t)$ so that it then suffices to study \eqref{eq:XY_relation} for the fluctuation.

\begin{proposition}\label{Y_stats}
The process $\tilde{Y}$ satisfies that
\[
\E \tilde{Y}_i(t) \tilde{Y}_j(s)= 
\begin{cases}
0 & i=j=0,\\
\delta_{ij}e^{-\lambda_i |t-s|} &~\text{otherwise}.
\end{cases}
\]

Consequently,
\begin{gather}
\E Y_i(t)Y_j(s)
=\Big(G^{1/2}\Big(I-\int_0^{|t-s|}\Lambda(\tau)d\tau \Big) G^{1/2}\Big)_{ij}-(G^{1/2})_{i0}(G^{1/2})_{j0}.
\end{gather}

\end{proposition}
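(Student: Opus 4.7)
The plan is to test the SPDE \eqref{eq:bar_eta_pde} for $\bar{\eta}_t$ against each eigenfunction $\phi_i$ and show that the resulting processes $\tilde{Y}_i$ form a family of independent (stationary) Ornstein--Uhlenbeck processes. Concretely, applying $\int \phi_i(\cdot)\,(\cdot)(dy)$ to \eqref{eq:bar_eta_pde} and integrating by parts against the Fokker--Planck operator yields $\int \phi_i \cL^*\bar{\eta}_t = \int (\cL\phi_i)\bar{\eta}_t = -\lambda_i\tilde{Y}_i$, so
\begin{gather*}
d\tilde{Y}_i(t)=-\lambda_i\tilde{Y}_i(t)\,dt+dM_i(t),
\qquad dM_i(t):=\sqrt{2\beta^{-1}}\int_{\bfX}\nabla\phi_i(y)\sqrt{\rho_*(y)}\cdot dW(dy,dt),
\end{gather*}
where $W$ is the vector-valued space-time Wiener sheet corresponding to $\xi$. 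The key structural computation is the quadratic covariation: using the self-adjointness of $\cL$ in $L^2(\mu_*)$ together with the Dirichlet form identity $D_{\cL}(\phi_i,\phi_j)=\beta^{-1}\langle\nabla\phi_i,\nabla\phi_j\rangle_{\mu_*}=\lambda_i\delta_{ij}$, one gets $d[M_i,M_j]_t=2\lambda_i\delta_{ij}\,dt$, so the $M_i$'s are independent martingales (pairwise orthogonal Gaussian noises).

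Next I would identify the initial/stationary law. Since $\bar{\eta}_0$ is the Gaussian limit of $\sqrt{N}(\bar{\mu}_N-\mu_*)$ for i.i.d.\ samples from $\mu_*$, its covariance is $\rho_*(x)\delta(x-y)-\rho_*(x)\rho_*(y)$, which gives $\E\tilde{Y}_i(0)\tilde{Y}_j(0)=\langle\phi_i,\phi_j\rangle_{\mu_*}-\mu_*(\phi_i)\mu_*(\phi_j)=\delta_{ij}$ for $i,j\geq 1$ and $0$ whenever $i$ or $j$ equals $0$. In particular $\tilde{Y}_0(t)\equiv 0$ since $\phi_0\equiv 1$ and $\bar{\eta}_t$ has zero total mass for all $t$ (both $\bar{\mu}_N$ and $\mu_*$ are probability measures), consistent with $\lambda_0=0$ and $dM_0=0$. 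For $i\geq 1$, solving the OU SDE with $\tilde{Y}_i(0)\sim\mathcal{N}(0,1)$ gives a stationary Gaussian process with $\E\tilde{Y}_i(t)\tilde{Y}_i(s)=e^{-\lambda_i|t-s|}$, and independence across $i$ follows from the orthogonality of the $M_i$'s. This establishes the first display of the proposition.

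For the consequence on $Y(t)=G^{1/2}\tilde{Y}(t)$, I would simply expand bilinearly:
\begin{gather*}
\E Y_i(t)Y_j(s)=\sum_{k,\ell\geq 0}(G^{1/2})_{ik}(G^{1/2})_{j\ell}\,\E\tilde{Y}_k(t)\tilde{Y}_\ell(s)
=\sum_{k\geq 1}(G^{1/2})_{ik}(G^{1/2})_{jk}e^{-\lambda_k|t-s|}.
\end{gather*}
To recast the right side, I would add and subtract the $k=0$ term: since $\lambda_0=0$ means $e^{-\lambda_0 \tau}=1$, and since the diagonal entries of $\int_0^\tau\Lambda(r)\,dr$ are $1-e^{-\lambda_k\tau}$ for $k\geq 1$ and $0$ for $k=0$, the matrix $I-\int_0^{|t-s|}\Lambda(r)\,dr$ has diagonal entries $e^{-\lambda_k|t-s|}$ for $k\geq 1$ and $1$ for $k=0$. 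Sandwiching between $G^{1/2}$ on both sides and subtracting the spurious $k=0$ contribution $(G^{1/2})_{i0}(G^{1/2})_{j0}$ yields the stated formula.

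The main technical obstacle is really the first step: making the test-function computation and the quadratic variation identity rigorous for the SPDE driven by $\nabla\cdot(\sqrt{\mu_*}\xi)$. This rests on the well-posedness framework cited from \cite{wang2021gaussian,fernandez1997hilbertian,ito1983distribution} and on the fact that under Assumption \ref{ass:spectralgap} the eigenfunctions $\phi_i$ lie in the domain of the generator with $\nabla\phi_i\in L^2(\mu_*)$; once these ingredients are in place the remainder is an OU calculation plus a linear-algebra rearrangement.
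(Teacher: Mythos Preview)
Your proposal is correct and follows essentially the same approach as the paper. The paper writes the explicit Duhamel/variation-of-constants formula for $\tilde{Y}_i(t)$ and computes the covariance $\E\tilde{Y}_i(t)\tilde{Y}_j(s)$ directly by splitting into the initial-data and noise contributions, while you phrase the same computation as identifying an Ornstein--Uhlenbeck SDE and invoking its stationary covariance; the key ingredients---the initial covariance $\delta_{ij}-\delta_{i0}\delta_{j0}$ from the CLT and the Dirichlet-form identity $\beta^{-1}\langle\nabla\phi_i,\nabla\phi_j\rangle_{\mu_*}=\lambda_i\delta_{ij}$---are identical in both, as is the bilinear expansion for $Y$.
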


\begin{proof}
We first note that $\tilde{Y}$ can be written as
\[
\tilde{Y}_i(t)=e^{-\lambda_i t}\int_{\bfX} \phi_i(y)\eta_0(dy)+\sqrt{2\beta^{-1}}
\int_0^t e^{-\lambda_i(t-s)}\int_{\bfX}\nabla\phi_i(y)\cdot \xi(y,s)\sqrt{\rho_*(y)}dyds.
\]
Here, $\xi$ is a vector space-time white noise such that
\[
\langle \xi(y, s)\otimes \xi(y', s')
\rangle =I \delta(y-y')\delta(s-s').
\]
Let $s\le t$. It can be verified directly that
\begin{multline*}
\E \tilde{Y}_i(t)\tilde{Y}_j(s)=e^{-\lambda_it- \lambda_js}\iint_{\bfX\times\bfX} \phi_i(y)\phi_j(y')[\mu_*(dy)\delta_y(dy')-\mu_*(dy)\mu_*(dy')]\\
+2\beta^{-1}\int_0^{s} 
e^{-\lambda_i(t-\tau)}e^{-\lambda_j(s-\tau)}
\int_{\bfX}\nabla\phi_i(y)\cdot\nabla\phi_j(y)\mu_*(dy)d\tau=: I_1+I_2.
\end{multline*}
Clearly,
\[
I_1=e^{-\lambda_i t-\lambda_j s}(\delta_{ij}-\delta_{i0}\delta_{j0}).
\]
 By \eqref{eq:generatorrelation}, one has
\begin{multline*}
I_2=-2\beta^{-1}e^{-\lambda_i(t-s)}\int_0^s e^{-\lambda_i \tau-\lambda_j \tau}d\tau\int_{\bfX}\phi_i\nabla\cdot(\nabla\phi_j(y)\rho_*(y))dy \\
=-2e^{-\lambda_i(t-s)}\int_0^s e^{-(\lambda_i+\lambda_j)\tau}d\tau\int_{\bfX}\phi_i\cL(\phi_j(y))\rho_*(y)dy.
\end{multline*}
If $j=0$, $I_2=0$. Otherwise, $I_2=\delta_{ij}(e^{-\lambda_i (t-s)}-e^{-\lambda_i t -\lambda_i s})$. Hence, 
\[
\E \tilde{Y}_i(t)\tilde{Y}_j(s)=
\begin{cases}
0 & i=j=0,\\
\delta_{ij}e^{-\lambda_i(t-s)} & \text{otherwise}.
\end{cases}
\]

Consequently,
\begin{multline*}
\E Y_i(t)Y_j(s)=\sum_{m,n\ge 0}(G^{1/2})_{im}(G^{1/2})_{jn}\E\tilde{Y}_m(t)\tilde{Y}_n(s)\\
=\sum_{n\ge1}(G^{1/2})_{in}(G^{1/2})_{jn}e^{-\lambda_n|t-s|}=\Big(G^{1/2}(I-\int_0^{|t-s|}\Lambda(\tau)d\tau) G^{1/2}\Big)_{ij}-(G^{1/2})_{i0}(G^{1/2})_{j0}.
\end{multline*}
\end{proof}

\subsection{The space homogeneous systems on torus}

In this section we set $\bfX$ to be the torus $[0, 2\pi]^d$. Consider 
\[
V(x) \equiv 0
\]
and $\mu_t(dx)\equiv\mu_*(dx)=(2\pi)^{-d}dx$. The effective potential $U(x,t)=V(x)+W*\mu \equiv \text{const.}$. The equation of fluctuation \eqref{eq:eta_pde} now becomes
\begin{gather}
\partial_t\eta=\beta^{-1}\Delta\eta
+\nabla\cdot(\nabla W*\eta \mu_t)-\sqrt{2\beta^{-1}}\nabla\cdot(\sqrt{\mu_*} \xi).
\end{gather}
Similarly, the fluctuation of the mean field limit $\bar{\eta}_t$ now satisfies
\begin{gather}\label{eq:torus_bar_eta}
\partial_t\bar{\eta}=\beta^{-1} \Delta\bar{\eta}-\sqrt{2\beta^{-1}}\nabla\cdot(\sqrt{\mu_*} \xi).
\end{gather}
Moreover, the generator in this case becomes
\begin{gather*}
    \cA=-\cL=-\beta^{-1}\Delta.
\end{gather*}

The eigenvalue problem for the Laplace operator on the torus $[0,2\pi]^d$ is 
\[
-\beta^{-1}\Delta\phi_j =\lambda_j\phi_j, \quad j=0,1,...
\]
Also, $\phi_0=1$ and $\lambda_0=0$. The eigenvalues of the generator are real and nonnegative
\[
0=\lambda_0<\lambda_1\le \lambda_2\le \cdots, \quad \lambda_j\to \infty \quad as \quad j \to \infty.
\]
In fact, the eigenvalues and the eigenfunctions are given by
\begin{gather*}
    \lambda = \beta^{-1}|k|^2,\quad \sqrt{2} \mathrm{cos}(k\cdot x), \quad   \sqrt{2} \mathrm{sin}(k\cdot x)
\end{gather*}
where $k=(k_1,k_2,\cdots,k_d)\in \mathbb{Z}^d$ and $|k|^2=k_1^2+k_2^2+\cdots+ k_d^2$. Below, we use $k(j)$ to represent the order rearrangement according to $\{\lambda_j\}_{j=0}^\infty$, namely that for $j\ge 1$
\begin{gather*}
\begin{split}
    &\lambda_{2j-1}=\beta^{-1}|k(j)|^2,\quad \phi_{2j-1}(x)=\sqrt{2} \mathrm{sin}(k(j)\cdot x);\\
    &\lambda_{2j}=\beta^{-1}|k(j)|^2, \quad \phi_{2j}(x)=\sqrt{2} \mathrm{cos}(k(j)\cdot x).
\end{split}
\end{gather*}

Furthermore, the eigenfunctions $\{\phi_j\}_{j=0}^{\infty}$ span periodic functions with period $2\pi$ in $L^2([0,2\pi]^d;\mu_*)$, which has the form of a (generalized) Fourier series:
\[
f=\sum_{j}\phi_jf_j,\quad f_j=\langle f,\phi_j\rangle_{\mu_*}
\]
with $\langle \phi_i,\phi_j\rangle_{\mu_*}=\delta_{ij}$.

Since $\Phi(x)=\Phi(-x)$, one can set the expansion of $\Phi$
\begin{gather*}
    \Phi(x)=\sum_{n\ge 0}\Phi_{2n}\phi_{2n}(x), \quad \Phi_{2n}=\langle\Phi,\phi_{2n}\rangle_{\mu_*}.
\end{gather*}
It is clear that
\[
\Phi_{2n}=\frac{\sqrt{2}}{(2\pi)^d}\hat{\Phi}(k(n))=\frac{\sqrt{2}}{(2\pi)^d}\hat{\Phi}(-k(n)), \quad n\ge 1.
\]

In this space homogeneous systems on torus, one can further simplify the problem and obtain refined results. Specifically, one has the following proposition at first
\begin{proposition}
\begin{enumerate}[(i)]
    \item G is diagonal. For $X\in \ell^2$, 
    \[
    (GX)_i=\begin{cases}  
    \Phi_0X_0  & i=0, \\
    \frac{1}{\sqrt{2}}\Phi_{2\lceil\frac{i}{2}\rceil}X_i & i \ge 1,
    \end{cases}
    \]
where $\lceil x \rceil$ is the smallest integer not less than $x$. 
    
\item It holds that for every $j$
    \begin{gather}\label{eq:X_j}
        \begin{split}
            &X_j(t)=Y_j(t)-C_j\int_0^t e^{-(C_j+\beta^{-1}|k(\lceil\frac{j}{2}\rceil)|^2)(t-s)}Y_j(s)ds, \quad if\quad W=+\Phi;\\
            &X_j(t)=Y_j(t)+C_j\int_0^t e^{-(-C_j+\beta^{-1}|k(\lceil\frac{j}{2}\rceil)|^2)(t-s)}Y_j(s)ds, \quad if\quad W=-\Phi,
        \end{split}
    \end{gather}
    where $C_j:=\frac{1}{\sqrt{2}}\Phi_{2\lceil\frac{j}{2}\rceil}|k(\lceil\frac{j}{2}\rceil)|^2$.
    
\item The process $Y$ satisfies that
\[
\E Y_i(t) Y_j(s)= 
\begin{cases}
0 & i=j=0,\\
\delta_{ij}\frac{1}{\sqrt{2}}\Phi_{2\lceil\frac{i}{2}\rceil}e^{-\lambda_i |t-s|} &~\text{otherwise}.
\end{cases}
\]
\end{enumerate}
\end{proposition}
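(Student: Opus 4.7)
\medskip

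\noindent\textbf{Proof plan.} The plan is to exploit the torus structure so that everything diagonalizes in the Fourier/eigenfunction basis, after which (ii) becomes a scalar linear Volterra equation that can be solved in closed form, and (iii) is essentially a restatement of Proposition \ref{Y_stats}.

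\emph{Step 1: diagonality of $G$.} For $n\ge 1$, the cosine addition identity
\[
2\cos(k(n)\cdot(y-y'))=2\cos(k(n)\cdot y)\cos(k(n)\cdot y')+2\sin(k(n)\cdot y)\sin(k(n)\cdot y')
\]
gives $\phi_{2n}(y-y')=\frac{1}{\sqrt{2}}\bigl[\phi_{2n}(y)\phi_{2n}(y')+\phi_{2n-1}(y)\phi_{2n-1}(y')\bigr]$, while trivially $\phi_0(y-y')=\phi_0(y)\phi_0(y')$. Substituting the expansion $\Phi(y-y')=\sum_{n\ge 0}\Phi_{2n}\phi_{2n}(y-y')$ into \eqref{eq:G_ij} and using the $L^2(\mu_*)$-orthonormality of $\{\phi_j\}$, the double integral collapses to diagonal terms, giving $G_{00}=\Phi_0$ and $G_{ii}=\frac{1}{\sqrt{2}}\Phi_{2\lceil i/2\rceil}$ for $i\ge 1$. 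This yields (i).

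\emph{Step 2: scalar Volterra equation.} Since both $G^{1/2}$ and $\Lambda(t)$ are diagonal, the matrix kernel $G^{1/2}\Lambda(t-s)G^{1/2}$ acting in \eqref{eq:XY_relation} is diagonal with entries $G_{jj}\lambda_j e^{-\lambda_j(t-s)}$. The coupled system therefore decouples into scalar equations
\[
X_j(t)=Y_j(t)\mp \beta G_{jj}\lambda_j\int_0^t e^{-\lambda_j(t-s)}X_j(s)\,ds.
\]
A direct computation, using $\lambda_j=\beta^{-1}|k(\lceil j/2\rceil)|^2$ and the formula for $G_{jj}$, shows that $\beta G_{jj}\lambda_j=C_j$. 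To solve the resulting equation, I introduce $f(t):=\int_0^t e^{-\lambda_j(t-s)}X_j(s)\,ds$, differentiate to obtain the linear ODE $f'(t)+(\lambda_j\pm C_j)f(t)=Y_j(t)$ with $f(0)=0$, and solve by the integrating factor to get $f(t)=\int_0^t e^{-(\lambda_j\pm C_j)(t-s)}Y_j(s)\,ds$. Substituting $X_j=Y_j\mp C_j f$ yields \eqref{eq:X_j} in both sign cases.

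\emph{Step 3: covariance of $Y$.} From Proposition \ref{Y_stats}, $\mathbb{E}\tilde Y_i(t)\tilde Y_j(s)=\delta_{ij}e^{-\lambda_i|t-s|}$ unless $i=j=0$. Because $G^{1/2}$ is diagonal with $(G^{1/2})_{ii}=\sqrt{G_{ii}}$, we compute
\[
\mathbb{E}Y_i(t)Y_j(s)=\sum_{m,n}(G^{1/2})_{im}(G^{1/2})_{jn}\mathbb{E}\tilde Y_m(t)\tilde Y_n(s)=\sqrt{G_{ii}G_{jj}}\,\mathbb{E}\tilde Y_i(t)\tilde Y_j(s),
\]
which collapses under the Kronecker $\delta$ to $G_{ii}\delta_{ij}e^{-\lambda_i|t-s|}$, giving the stated formula.

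\emph{Expected difficulty.} None of the steps is technically heavy once diagonality is in hand; the only place where one must be careful is the bookkeeping between the pair $(\phi_{2n-1},\phi_{2n})$ sharing the same eigenvalue in Step 1, and making sure the index $\lceil j/2\rceil$ correctly identifies the Fourier mode for both sine and cosine partners. The Volterra-to-ODE trick in Step 2 is standard, and Step 3 is bookkeeping.
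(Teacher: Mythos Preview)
Your proposal is correct and follows essentially the same approach as the paper: the cosine addition identity to diagonalize $G$, the decoupling of \eqref{eq:XY_relation} into scalar Volterra equations via diagonality, and the appeal to Proposition~\ref{Y_stats} for the covariance. The only difference is cosmetic: you spell out the Volterra-to-ODE integrating-factor trick in Step~2, whereas the paper simply asserts that ``one can solve these Volterra equations'' and writes down the answer.
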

\begin{proof}
\begin{enumerate}[(i)]
\item
Recall from \eqref{eq:G_ij} that
\begin{gather*}
\begin{split}
G_{ij}&= \iint_{\bfX\times\bfX} \Phi(y-y')\phi_i(y)\phi_j(y')\mu_*(dy)\mu_*(dy').\\
    &=\sum_{n\ge 0}\Phi_{2n} \iint_{\bfX\times\bfX} \phi_{2n}(y-y')\phi_i(y)\phi_j(y')\mu_*(dy)\mu_*(dy')\\
    &=\begin{cases}
    \delta_{ij}\Phi_0 & i=0,\\
    \frac{1}{\sqrt{2}}\delta_{ij}\Phi_{2\lceil \frac{i}{2}\rceil} &i\ge 1,
    \end{cases}
\end{split}
\end{gather*}
where the second last equality comes from the following formula
\begin{gather*}
    \phi_{2n}(y-y')=\frac{1}{\sqrt{2}}\Big(\phi_{2n}(y)\phi_{2n}(y')+\phi_{2n-1}(y)\phi_{2n-1}(y')\Big), \quad n\ge1.
\end{gather*}

\item By \eqref{eq:XY_relation}, one has
\begin{gather*}
    X(t)=Y(t)\mp \beta\int_0^t G^{1/2}\Lambda(t-s)G^{1/2}X(s) ds.
\end{gather*}
In the case that $G$ is diagonal and $\lambda_j=\beta^{-1}|k(\lceil\frac{j}{2}\rceil)|^2$, one obtains that for every $j$ 
\begin{gather}\label{eq:X_jY_j}
    X_j(t)=Y_j(t)\mp C_j\int_0^t e^{-\beta^{-1}|k(\lceil\frac{j}{2}\rceil)|^2(t-s)}X_j(s)ds,
\end{gather}
where $C_j=\frac{1}{\sqrt{2}}\Phi_{2\lceil\frac{j}{2}\rceil}|k(\lceil\frac{j}{2}\rceil)|^2$.

Then, one can solve these Volterra equations \ref{eq:X_jY_j} and find that for every $j$
\begin{gather*}
    \begin{split}
        &X_j(t)=Y_j(t)-C_j\int_0^t e^{-(C_j+\beta^{-1}|k(\lceil\frac{j}{2}\rceil)|^2)(t-s)}Y_j(s)ds, \quad if\quad W=+\Phi;\\
        &X_j(t)=Y_j(t)+C_j\int_0^t e^{-(-C_j+\beta^{-1}|k(\lceil\frac{j}{2}\rceil)|^2)(t-s)}Y_j(s)ds, \quad if\quad W=-\Phi.
      \end{split}
\end{gather*}

\item
This result is the version of Proposition \ref{Y_stats} about the space homogeneous systems on torus.
\end{enumerate}
\end{proof}

Due to the explicit expressions of $X$, we may find the pointwise estimate of the fluctuations and investigate the asymptotic behaviors.
\begin{theorem}\label{thm:torus}
Assume that $V=0$ and let Assumption \ref{ass:kernel} hold. 
\begin{enumerate}[(i)]
\item If $W=\Phi$, then $\E\|\eta_t\|_{\Phi}^2$ is decreasing in time, and for any $t>0$ 
\begin{gather*}
 \E\|\eta_t\|_{\Phi}^2 <\E\|\bar{\eta}_t\|_{\Phi}^2.
\end{gather*}
Moreover, as $t\to\infty$ one has
\[
\lim_{t\to\infty}\E \|\eta_t\|_{\Phi}^2= \sum_{j\ge 1}\frac{\E |Y_j|^2}{1+\beta \E |Y_j|^2},
\]
and consequently $\lim_{\beta\to +\infty}\lim_{t\to\infty}\|\eta_t\|_{\Phi}^2=0$.

\item

If $W=-\Phi$, then $\E\|\eta_t\|_{\Phi}^2$ is increasing in time, and for any $t>0$
\begin{gather*}
\E\|\eta_t\|_{\Phi}^2 > \E\|\bar{\eta}_t\|_{\Phi}^2, 
\end{gather*}
Moreover, there is a critical value $\beta_c$ such that when  $\beta>\beta_c$, 
$\lim_{t\to \infty}\E\|\eta_t\|_{\Phi}^2=+\infty$.
\end{enumerate}
\end{theorem}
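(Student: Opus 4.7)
The plan is to reduce the theorem to a one-dimensional Ornstein--Uhlenbeck analysis mode by mode, leveraging the explicit Volterra formulas in \eqref{eq:X_j}. The key observation is that the stationary Gaussian process $Y_j(t)$, with covariance $A_j e^{-\lambda_j|t-s|}$ where $A_j := \tfrac{1}{\sqrt 2}\Phi_{2\lceil j/2\rceil}$, admits the OU representation $dY_j = -\lambda_j Y_j\,dt + \sqrt{2A_j\lambda_j}\,dB_j$ (stationary variance $A_j$). Introducing the convolution $Z_j(t) = C_j\int_0^t e^{-\alpha_j(t-s)}Y_j(s)\,ds$, which satisfies $\dot Z_j = -\alpha_j Z_j + C_j Y_j$ with $Z_j(0)=0$, so that \eqref{eq:X_j} reads $X_j = Y_j\mp Z_j$, a direct computation shows a striking cancellation: the $C_jY_j$ drift in $Z_j$ recombines with the $-\lambda_jY_j$ drift in $Y_j$ to eliminate the explicit $Y_j$ forcing, yielding
\[
dX_j = -\alpha_j X_j\,dt + \sqrt{2A_j\lambda_j}\,dB_j,\qquad X_j(0)=Y_j(0)\sim\mathcal{N}(0,A_j),
\]
with $\alpha_j = \lambda_j + C_j$ for $W=\Phi$ and $\alpha_j = \lambda_j - C_j$ for $W=-\Phi$. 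Thus the interaction only modifies the OU damping, leaving the noise amplitude untouched.

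Once this is established, It\^o's formula gives $\tfrac{d}{dt}\E X_j^2 = -2\alpha_j\E X_j^2 + 2A_j\lambda_j$, which integrates to
\[
\E X_j^2(t) = \frac{A_j\lambda_j}{\alpha_j} + \frac{A_j(\alpha_j-\lambda_j)}{\alpha_j}\,e^{-2\alpha_j t}
\]
whenever $\alpha_j>0$. For $W=\Phi$ we have $\alpha_j-\lambda_j = C_j\ge 0$ and $\alpha_j>0$, so $\E X_j^2(t)$ decreases monotonically from $A_j$ to $A_j\lambda_j/(C_j+\lambda_j) = A_j/(1+\beta A_j)$, using the identity $C_j/\lambda_j = \beta A_j$. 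Summing over $j$ via $\E\|\eta_t\|_\Phi^2 = \sum_j\E X_j^2(t)$ from Proposition \ref{pro:basicreduced}(ii), and noting $\E\|\bar{\eta}_t\|_\Phi^2 = \sum_j A_j$, yields both the monotonicity and the strict inequality (strict because Assumption \ref{ass:kernel} guarantees at least one $C_j>0$). The $\beta\to\infty$ limit then follows by dominated convergence, since $A_j/(1+\beta A_j)\to 0$ pointwise and is dominated by the summable sequence $(A_j)$ (summability from $\hat{\Phi}\in L^1$).

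For $W=-\Phi$, the same OU SDE applies with $\alpha_j=\lambda_j-C_j$. When $\alpha_j>0$ the formula gives $\E X_j^2(t)$ monotonically \emph{increasing} to $A_j/(1-\beta A_j)>A_j$, giving the reversed inequality. Since $\alpha_j = |k(\lceil j/2\rceil)|^2\bigl(\beta^{-1}-\tfrac{1}{\sqrt 2}\Phi_{2\lceil j/2\rceil}\bigr)$, setting $\beta_c := \sqrt{2}/\sup_{n\ge 1}\Phi_{2n}$ (positive and finite, because $\hat{\Phi}\not\equiv 0$ and $\Phi_{2n}\to 0$ by Riemann--Lebesgue on $\mathbb{Z}^d$) produces a threshold such that $\beta>\beta_c$ forces $\alpha_j<0$ for at least one $j$; for that mode the OU is anti-damped and $\E X_j^2(t)$ grows like $e^{2|\alpha_j|t}$, so the total $\E\|\eta_t\|_\Phi^2$ diverges as $t\to\infty$. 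The main subtlety I anticipate is verifying the precise cancellation leading to the clean OU SDE for $X_j$, together with carefully interchanging the infinite sum over $j$ with $t\to\infty$ limits; both turn out to be routine once the OU structure is in hand, using $(A_j)\in\ell^1$ and the fact that only finitely many modes can be unstable for any fixed $\beta$.
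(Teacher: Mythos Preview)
Your argument is correct and follows the same overall architecture as the paper: both proofs work mode by mode, compute $\E|X_j(t)|^2$ explicitly, and sum over $j$ using Proposition~\ref{pro:basicreduced}(ii). The difference is purely in how the per-mode variance is obtained. The paper expands the convolution formula \eqref{eq:X_j} directly, computing the three expectation terms $\E|Y_j|^2$, $-2C_j\int_0^t e^{-\alpha_j(t-s)}\E Y_j(t)Y_j(s)\,ds$, and $C_j^2\E\bigl|\int_0^t e^{-\alpha_j(t-s)}Y_j(s)\,ds\bigr|^2$ using the covariance in Proposition~\ref{Y_stats}, and simplifies to $\E|Y_j|^2\bigl(1+\tfrac{\pm C_j}{\alpha_j}(e^{-2\alpha_j t}-1)\bigr)$. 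You instead observe that the combination $X_j=Y_j\mp Z_j$ collapses to a single Ornstein--Uhlenbeck equation $dX_j=-\alpha_j X_j\,dt+\sqrt{2A_j\lambda_j}\,dB_j$ with $X_j(0)\sim\mathcal N(0,A_j)$, from which the same variance formula drops out of a one-line ODE. Your route is tidier and explains \emph{why} the answer is so clean (the interaction merely shifts the damping without touching the noise), at the cost of needing to justify the SDE representation of $Y_j$ from the SPDE~\eqref{eq:torus_bar_eta}; the paper's route is more elementary, requiring only the covariance structure already recorded in Proposition~\ref{Y_stats}. One minor point: for $W=-\Phi$ you treat $\alpha_j>0$ and $\alpha_j<0$ but should also note the borderline case $\alpha_j=0$ (linear growth $\E X_j^2(t)=A_j(1+2\lambda_j t)$), which the paper handles explicitly; your OU equation still covers it since $d\E X_j^2/dt=2A_j\lambda_j$ there.
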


\begin{proof}
\begin{enumerate}[(i)]
\item 
Recall from Proposition \ref{pro:basicreduced} that
\begin{gather*}
\|\eta_t\|_{\Phi}^2=\langle X,X\rangle_{\ell^2},\quad \|\bar{\eta}_t\|_{\Phi}^2=\langle Y,Y\rangle_{\ell^2}.
\end{gather*}

For $\|\bar{\eta}_t\|_{\Phi}^2$, it is clear that 
\[
\E\|\bar{\eta}_t\|_{\Phi}^2=\sum_{j\ge 0}\E |Y_j(t)|^2=\frac{1}{\sqrt{2}}\sum_{j\ge 1}\Phi_{2\lceil\frac{j}{2}\rceil}=\sqrt{2}\sum_{n\ge 1}\Phi_{2n}=\Phi(0)-\Phi_0,
\]
where $\Phi_0=\frac{1}{(2\pi)^d}\int_{\bfX} \Phi(y)dy \le \Phi(0)$ is due to the property of positive definite kernel.\\

Recall from \eqref{eq:X_j} that 
    \begin{gather*}
        \begin{split}
            &X_j(t)=Y_j(t)-C_j\int_0^t e^{-(C_j+\lambda_j)(t-s)}Y_j(s)ds, \quad if\quad W=+\Phi;\\
            &X_j(t)=Y_j(t)+C_j\int_0^t e^{-(-C_j+\lambda_j)(t-s)}Y_j(s)ds, \quad if\quad W=-\Phi,
        \end{split}
    \end{gather*}
where $C_j=\frac{1}{\sqrt{2}}\Phi_{2\lceil\frac{j}{2}\rceil}|k(\lceil\frac{j}{2}\rceil)|^2$ and $\lambda_j=\beta^{-1}|k(\lceil\frac{j}{2}\rceil)|^2$.\\

If $W=\Phi$, using Proposition \ref{Y_stats}, one can derive that
\[
\begin{split}
&\E|X_j(t)|^2\\
&=\E|Y_j(t)|^2-2C_j\int_0^te^{-(C_j+\lambda_j)(t-s)}\E Y_j(t)Y_j(s)ds+C_j^2\E\Big|\int_0^te^{-(C_j+\lambda_j)(t-s)}Y_j(s)ds\Big|^2\\
&=\E|Y_j|^2\Big(1+\frac{C_j}{C_j+\lambda_j}(e^{-2(C_j+\lambda_j)t}-1)\Big)\le \E|Y_j|^2,
\end{split}
\]
where we used $\E|Y_j|^2$ because this is independent of $t$.

Then,
\begin{gather*}
\E\|\eta_t\|_{\Phi}^2=\sum_{j\ge 0} \E|X_j(t)|^2\le \sum_{j\ge 0} \E|Y_j(t)|^2= \E\|\bar{\eta}_t\|_{\Phi}^2.
\end{gather*}
Moreover, it is clear that $\E\|\eta_t\|_{\Phi}^2$ is decreasing in time and
\[
\lim_{t\to\infty}\E \|\eta_t\|_{\Phi}^2= \sum_{j\ge 1}\frac{\E |Y_j|^2}{1+\beta \E |Y_j|^2}.
\]

By the dominant convergence theorem, it is found that 
\[
\lim_{\beta \to \infty}\lim_{t\to\infty}\E\|\eta_t\|_{\Phi}^2=0.
\]

\item
In the case of $W=-\Phi$, one finds that
\begin{gather*}
\E|X_j(t)|^2=
\begin{cases}
    \E|Y_j|^2\Big(1+\frac{-C_j}{-C_j+\lambda_j}(e^{-2(-C_j+\lambda_j)t}-1)\Big),  &C_j \ne \lambda_j,\\
  \E|Y_j|^2(1+2\lambda_jt), & C_j=\lambda_j,
    \end{cases}
\end{gather*}
hence $\E|X_j(t)|^2\ge \E|Y_j|^2$ and $\E|X_j(t)|^2$ is increasing in time.\\

Then it follows that
\begin{gather*}
\E\|\eta_t\|_{\Phi}^2=\sum_{j\ge 0} \E|X_j(t)|^2\ge \sum_{j\ge 0} \E|Y_j(t)|^2= \E\|\bar{\eta}_t\|_{\Phi}^2.
\end{gather*}

Note that if $\lambda_j<C_j$, namely $\beta^{-1}<\frac{1}{\sqrt{2}}\Phi_{2\lceil\frac{j}{2}\rceil}$, then $\lim_{t \to\infty}\E|X_j(t)|^2=+\infty$, which implies $\lim_{t\to\infty}\E\|\eta_t\|_{\Phi}^2=+\infty$.\\

Consequently, take 
\[
\beta_c=\min_{j\ge 1}\frac{\sqrt{2}}{\Phi_{2\lceil j/2\rceil}},
\]
if $\beta>\beta_c$, then $\lim_{t\to\infty}\E\|\eta_t\|_{\Phi}^2=+\infty$. 


\end{enumerate}
\end{proof}

In the $W=\Phi$ case, the repulsive force prevents
all the particles to collapse together.
In the $\beta\to\infty$ (zero temperature limit), the fluctuation in the interacting particle systems is expected to vanish. Theorem \ref{thm:torus} only gives a qualitative estimate regarding the vanishing rate of the fluctuation as $\beta\to \infty$. Some quantitative estimates might be obtained if one knows the decay rates of the modes in $\Phi$. For example, if 
$\Phi_{2\lceil j/2\rceil}\sim j^{-\alpha}$ with $\alpha>1$, then 
\[
\lim_{t\to\infty}\E\|\eta_t\|_{\Phi}^2\sim \beta^{-\frac{\alpha-1}{\alpha}}.
\]
This can be seen by
\[
\sum_{j}\frac{j^{-\alpha}}{1+\beta j^{-\alpha}}=\sum_{j\le \beta^{\nu}}  
+\sum_{j>\beta^{\nu}} 
\le \sum_{j\le \beta^{\nu}}\beta^{-1}
+\sum_{j>\beta^{\nu}} j^{-\alpha}
\sim \beta^{\nu-1}+\beta^{\nu(1-\alpha)}.
\]
Choosing $\nu\sim \alpha^{-1}$ gives the rate.

On the other hand, when particles attract each other, namely $W=-\Phi$, the fluctuation is increasing in time. In this case, the small temperature (smaller than a critical value) leads to the infinite fluctuation as $t\to\infty$.

The analysis for singular $\Phi$ such that $\hat{\Phi}\notin L^1$ is more challenging and left for the future. For example, the particles with Coulomb interaction in periodic box, with charge neutrality condition.

\subsection{Results in general cases}

In this part, we turn to analyze the phenomena of fluctuation suppression and enhancement in more general cases, especially for its long-time behaviour.

Recall that in the case of $W=\Phi$, our basic equation is 
\begin{gather}\label{eq:volterra1}
X(t)+\int_0^t\Gamma(t-s) X(s)ds=Y(t),
\end{gather}
where $\Gamma(t):=\beta G^{1/2}\Lambda(t)G^{1/2}$ is the Volterra kernel and of positive type  \cite[Chap. 16]{gripenberg1990volterra}. 
The Laplace transform clearly satisfies
\[
\tilde{X}(s)=(I+\tilde{\Gamma}(s))^{-1}\tilde{Y}(s).
\]
We may expect that the norm of $X$ is again smaller as in the special case discussed above. However, it is not easy to obtain the comparison for every $t$:
\[
\E\|\eta_t\|_{\Phi}^2  \le \E\|\bar{\eta}_t\|_{\Phi}^2.
\]
According to the definition of the positive type kernel, we may expect to get the comparison in the time average sense. In fact, by the positive definiteness of the kernel, one has
\begin{gather}\label{eq:generalxy}
\begin{aligned}
\int_0^T \langle X(t),Y(t)\rangle_{\ell^2}dt &=\int_0^T \langle X(t),X(t)\rangle_{\ell^2}dt
+\int_0^T \int_0^t \langle X(t), \Gamma(t-s)X(s)\rangle_{\ell^2}dsdt\\
&  \ge \int_0^T \langle X(t),X(t)\rangle_{\ell^2}dt.
\end{aligned}
\end{gather}
Hence, we may expect that the analogues of the results in Theorem \ref{thm:torus} could hold in the average sense. In fact, we have the following results.
\begin{theorem}\label{thm:general}
Suppose that Assumptions \ref{ass:spectralgap} and \ref{ass:kernel} hold. Then, we have the following claims.
\begin{enumerate}[(i)] 
    \item $(W=\Phi$, positive definite case$)$ For any $T>0$, it holds almost surely that
 \[
 \frac{1}{T}\int_0^T\|\eta_t\|_{\Phi}^2dt< \frac{1}{T}\int_0^T\|\bar{\eta}_t\|_{\Phi}^2dt.
 \]
 
\item $(W=-\Phi$, negative definite case$)$ Assume the interaction is weak such that
\[
\|G\|\le 2\beta^{-1},
\]
where $\|\cdot\|$ is the operator norm
\[
\|G\|=\mathop{\sup}_{\|X\|_{\ell^2}=1}\|GX\|_{\ell^2}=\mathop{\sup}_{\|Y\|_{\mu_*}=1}\Big\|\big\langle\Phi(y-x),Y(x)\big\rangle_{\mu_*(dx)}\Big\|_{\mu_*(dy)}.
\]
Then for any $T>0$ it holds almost surely that
\[
\frac{1}{T}\int_0^T\|\eta_t\|_{\Phi}^2dt> \frac{1}{T}\int_0^T\|\bar{\eta}_t\|_{\Phi}^2dt.
\]
\end{enumerate}
\end{theorem}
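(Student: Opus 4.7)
The plan is to work entirely at the level of the reduced Volterra system \eqref{eq:XY_relation}, written compactly as $X \pm KX = Y$ in $L^2([0,T];\ell^2)$, where
\[
KX(t) := \int_0^t \Gamma(t-s)\,X(s)\,ds, \qquad \Gamma(t) = \beta\, G^{1/2}\Lambda(t)\,G^{1/2}.
\]
Proposition \ref{pro:basicreduced}(ii) identifies $\|\eta_t\|_{\Phi}^2 = \|X(t)\|_{\ell^2}^2$ and $\|\bar\eta_t\|_{\Phi}^2 = \|Y(t)\|_{\ell^2}^2$, so both statements reduce to comparing $\int_0^T\|X(t)\|_{\ell^2}^2\,dt$ with $\int_0^T\|Y(t)\|_{\ell^2}^2\,dt$ for the sample paths.

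For part (i) I would take the $\ell^2$ inner product of $X + KX = Y$ against $X(t)$ and integrate in $t$, recovering exactly the identity \eqref{eq:generalxy}. Positive-typeness of $\Gamma$ makes the Volterra cross term nonnegative, yielding $\int_0^T \|X\|_{\ell^2}^2\,dt \le \int_0^T \langle X,Y\rangle_{\ell^2}\,dt$, and a single application of Cauchy--Schwarz in $L^2([0,T];\ell^2)$ closes the argument.

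For part (ii) I would expand $\|Y\|^2 = \|X\|^2 - 2\langle X,KX\rangle + \|KX\|^2$ and integrate, reducing the claim to
\[
\int_0^T \|KX(t)\|_{\ell^2}^2\,dt \;\le\; 2\int_0^T \langle X(t), KX(t)\rangle_{\ell^2}\,dt.
\]
To establish this I would exploit the diagonal form of $\Lambda$: set $Z(t) := G^{1/2}X(t)$ and $W(t) := \int_0^t \Lambda(t-s) Z(s)\,ds$, so $KX = \beta G^{1/2} W$, and note that each coordinate satisfies the ODE $W_i' + \lambda_i W_i = \lambda_i Z_i$, $W_i(0)=0$, with $W_0\equiv 0$ since $\lambda_0=0$. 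Substituting $Z_i = W_i + \lambda_i^{-1}W_i'$ and integrating by parts gives, for every $i\ge 1$,
\[
\int_0^T Z_i(t) W_i(t)\,dt = \int_0^T W_i(t)^2\,dt + \frac{W_i(T)^2}{2\lambda_i} \;\ge\; \int_0^T W_i(t)^2\,dt,
\]
so $\int_0^T \langle Z, W\rangle_{\ell^2}\,dt \ge \int_0^T \|W(t)\|_{\ell^2}^2\,dt$. Combining with the pointwise operator bound $\langle GW(t),W(t)\rangle \le \|G\|\,\|W(t)\|^2$ yields
\[
\int_0^T \|KX\|^2\,dt = \beta^2\!\int_0^T \langle GW,W\rangle\,dt \;\le\; \beta^2\|G\|\!\int_0^T \|W\|^2\,dt \;\le\; \beta\|G\|\!\int_0^T \langle X,KX\rangle\,dt,
\]
and the hypothesis $\|G\|\le 2\beta^{-1}$ then closes the required inequality.

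The main obstacle I anticipate is upgrading these non-strict estimates to the almost-sure \emph{strict} inequalities asserted in the theorem, since in part (ii) even the hypothesis $\|G\|\le 2\beta^{-1}$ is non-strict. I would handle this by tracking the equality cases: in (i), equality in Cauchy--Schwarz forces $Y(t) = c X(t)$ with $c$ a constant, and plugging this into the Volterra equation together with $\int \langle X, KX\rangle = 0$ forces $KX\equiv 0$ on $[0,T]$; in (ii) equality throughout the chain forces $W(t)$ to sit in the top eigenspace of $G$ for all $t$ and $W_i(T)=0$ for every active mode. In both cases, the equality events cut out proper closed subspaces of the Gaussian path space of $Y$ and hence carry probability zero by the non-degeneracy of $Y$ displayed in Proposition \ref{Y_stats}.
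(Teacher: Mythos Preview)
Your approach is essentially the same as the paper's, and the core computations in part (ii) are identical: the paper writes $q_j(t)=\int_0^t e^{\lambda_j s}(G^{1/2}X(s))_j\,ds$, so that your $W_j(t)=\lambda_j e^{-\lambda_j t}q_j(t)$, and your integration-by-parts identity $\int_0^T Z_iW_i\,dt=\int_0^T W_i^2\,dt+W_i(T)^2/(2\lambda_i)$ is precisely the paper's formula
\[
\int_0^T\!\!\int_0^t \langle X(t),G^{1/2}\Lambda(t-s)G^{1/2}X(s)\rangle\,ds\,dt
=\tfrac{1}{2}\sum_j\lambda_je^{-2\lambda_jT}q_j^2(T)+\int_0^T\sum_j\lambda_j^2e^{-2\lambda_jt}q_j^2(t)\,dt.
\]
The use of $\|G^{1/2}W\|^2\le\|G\|\,\|W\|^2$ and the closing step under $\|G\|\le 2\beta^{-1}$ match exactly.

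The one genuine difference is in part (i). You pair $X+KX=Y$ against $X$ and then invoke Cauchy--Schwarz in $L^2([0,T];\ell^2)$; this is the heuristic the paper sketches just \emph{before} the theorem (your reference to \eqref{eq:generalxy}), but in the actual proof the paper instead expands $\|Y\|^2=\|X+KX\|^2=\|X\|^2+2\langle X,KX\rangle+\|KX\|^2$ directly and integrates. This is shorter---no Cauchy--Schwarz is needed---and strictness falls out immediately from the dropped term $\int_0^T\|KX(t)\|^2\,dt$, avoiding the equality-case analysis you outline. Your route is correct but slightly roundabout by comparison. Conversely, your treatment of the strict inequality (tracing equality back to $KX\equiv0$ and hence to a proper Gaussian null set via Proposition~\ref{Y_stats}) is more careful than the paper, which simply asserts the strict sign after dropping the squared term.
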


\begin{proof}
Clearly, it reduces to check $L^2(\nu)$ norms of $\hat{\eta}$ and $\hat{\bar{\eta}}$.

\begin{enumerate}[(i)]

\item 
By direct computation, 
\begin{gather*}
\begin{split}
\int_0^T\|\hat{\eta}_t\|_{L^2(\nu)}^2dt &=\int_0^T\|\hat{\bar{\eta}}_t\|_{L^2(\nu)}^2dt-2\beta\int_0^T\int_0^t\Big\langle X(t),G^{1/2}\Lambda(t-s)G^{1/2}X(s)\Big\rangle_{\ell^2}dsdt\\
&\quad\quad -\beta^2\int_0^T\Big\|\int_0^tG^{1/2}\Lambda(t-s)G^{1/2}X(s)ds\Big\|_{\ell^2}^2 dt\\
&< \int_0^T\|\hat{\bar{\eta}}_t\|_{L^2(\nu)}^2dt-2\beta\int_0^T\Big\langle X(t),\int_0^tG^{1/2}\Lambda(t-s)G^{1/2}X(s)ds\Big\rangle_{\ell^2}dt.
\end{split}
\end{gather*}
Here, it is easy to see that the term we threw away is negative.

By some direct calculations one finds that
\begin{multline*}
    \int_0^T\int_0^t \Big\langle X(t),G^{1/2}\Lambda(t-s)G^{1/2}X(s)\Big\rangle_{\ell^2}dsdt\\
    =\frac{1}{2}\sum_{j\ge 0}\lambda_je^{-2\lambda_jT}q^2_j(T)+\int_0^T\sum_{j\ge 0}\lambda_j^2e^{-2\lambda_jt}q^2_j(t)dt\ge0,
\end{multline*}
where
\[
q_j(t):=\int_0^te^{\lambda_js}(G^{1/2}X(s))_jds.
\]
It follows that
\[\int_0^T\|\hat{\eta}_t\|_{L^2(\nu)}^2dt<\int_0^T\|\hat{\bar{\eta}}_t\|_{L^2(\nu)}^2dt.\]
\item In the case of $W=-\Phi$, one has
   \begin{gather*}
   \begin{split}
    \int_0^T\|\hat{\eta}_t\|_{L^2(\nu)}^2dt &=\int_0^T\|\hat{\bar{\eta}}_t\|_{L^2(\nu)}^2dt+2\beta\int_0^T\int_0^t\Big\langle X(t),G^{1/2}\Lambda(t-s)G^{1/2}X(s)\Big\rangle_{\ell^2}dsdt\\
    &\quad\quad-\beta^2\int_0^T\Big\|\int_0^tG^{1/2}\Lambda(t-s)G^{1/2}X(s)ds\Big\|_{\ell^2}^2 dt\\
    &> \int_0^T\|\hat{\bar{\eta}}_t\|_{L^2(\nu)}^2dt+2\beta\int_0^T\int_0^t\Big\langle X(t),G^{1/2}\Lambda(t-s)G^{1/2}X(s)\Big\rangle_{\ell^2}dsdt\\
    &\quad \quad-\beta^2\|G\|\int_0^T\Big\|\int_0^t\Lambda(t-s)G^{1/2}X(s)ds\Big\|_{\ell^2}^2 dt.
   \end{split}
   \end{gather*}
    After some calculations, one finds that
    \[
    \begin{split}
    \int_0^T\Big\|\int_0^t\Lambda(t-s)G^{1/2}X(s)ds\Big\|_{\ell^2}^2 dt&=\int_0^T \sum_{j\ge 0}\lambda_j^2e^{-2\lambda_j t}\Big(\int_0^te^{\lambda_j s}(G^{1/2}X(s))_j ds\Big)^2dt\\
    &=\int_0^T\sum_{j\ge 0}\lambda_j^2e^{-2\lambda_jt}q^2_j(t)dt.
    \end{split}\]
    Then,
    \begin{multline*}
    \int_0^T\|\hat{\eta}_t\|_{L^2(\nu)}^2dt> \int_0^T\|\hat{\bar{\eta}}_t\|_{L^2(\nu)}^2dt+\beta\sum_{j\ge 0}\lambda_je^{-2\lambda_jT}q^2_j(T)\\
    +(2\beta-\beta^2\|G\|)\Big(\int_0^T\sum_{j\ge 0}\lambda_j^2e^{-2\lambda_jt}q^2_j(t)dt\Big).
    \end{multline*}
    It is clear that under the assumption of $\|G\|\le 2\beta^{-1}$, one has
    \[\int_0^T\|\hat{\eta}_t\|_{L^2(\nu)}^2dt > \int_0^T\|\hat{\bar{\eta}}_t\|_{L^2(\nu)}^2dt,\]
which concludes the proof.
\end{enumerate}
\end{proof}

Compared with the results in Theorem \ref{thm:torus}, the results in Theorem \ref{thm:general} are in almost surely sense but the average is taken along time.  
Again, we find that the fluctuation $\eta_t$ is smaller for systems with positive definite kernels  while the fluctuation could be larger for systems with negative definite kernels, compared with the fluctuation $\bar{\eta}_t$ in standard Monte Carlo sampling. 
It is more interesting to investigate the behavior 
\[
\lim_{\beta\to \infty}\lim_{T\to\infty}\frac{1}{T}\int_0^T \|\eta_t\|_{\Phi}^2.
\]
For general systems, the values of the kernel at different time points $\Gamma(t_1)$ and $\Gamma(t_2)$ do not commute, which brings difficulty in analysis. We leave the analysis of this asymptotic behavior for future study.

Below, we perform some discussion using the resolvent of the Volterra equation.
The resolvent $\Omega$ satisfies 
\[
\Omega=\Gamma-\Gamma*\Omega=\Gamma-\Omega*\Gamma.
\]
Under some appropriate assumptions, the resolvent $\Omega$ is also of positive type \cite[Chap. 16, Theorem 5.6]{gripenberg1990volterra}, and the solution of equation \eqref{eq:volterra1} is given by 
\[
X(t)=Y(t)-\int_0^t\Omega(t-s)Y(s)ds.
\]
Since $\Omega$ is of positive type, one has
\begin{multline*}
\int_0^T \langle X(t),Y(t)\rangle_{\ell^2}dt=\int_0^T \langle Y(t),Y(t)\rangle_{\ell^2} \\
-\int_0^T\int_0^t \langle Y(t),\Omega(t-s)Y(s)\rangle_{\ell^2}dsdt\le \int_0^T \langle Y(t),Y(t)\rangle_{\ell^2}.
\end{multline*}
Together with \eqref{eq:generalxy}, one has the desired comparison.

When $W=-\Phi$, the basic equation turns out to be
\begin{gather}\label{eq:volterra2}
X(t)-\int_0^t\Gamma(t-s) X(s)ds=Y(t).
\end{gather}
Similarly, if there exists some $\tilde{\Omega}$ satisfying
\begin{gather}\label{eq:tilde_Omega}
\tilde{\Omega}=\Gamma+\Gamma*\tilde{\Omega} = \Gamma+\tilde{\Omega}*\Gamma.
\end{gather}
then one can solve \eqref{eq:volterra2} as 
\[
X(t)=Y(t)+\int_0^t \tilde{\Omega}(t-s)Y(s) ds.
\]
We expect that if the interaction is weak and the $L^1$ norm of $\Gamma$ is small, then $\tilde{\Omega}$ is of positive type.

The result in Theorem \ref{thm:general} (ii) requires the type of inequality
\[
q \int_0^T\int_0^t\Big\langle X(t),\Gamma(t-s)X(s)\Big\rangle_{\ell^2}dsdt\ge \int_0^T\Big\|\int_0^t \Gamma(t-s)X(s)ds\Big\|_{\ell^2}^2 dt.
\]
This in fact requires the anti-coercivity of the kernel as discussed in  \cite[Chap. 16]{gripenberg1990volterra}. The positive constant $q$ is called coercivity constant of $\Gamma$. It is clear that if $G^{1/2}\Lambda(t-s)G^{1/2}$ is of anti-coercive type with coercivity constant $q=2\beta^{-1}$, one can recover the result in Theorem \ref{thm:general} (ii).

The resolvent can be written out explicitly using a series. For example, if $W=\Phi$ that is integrable, then one has
\begin{gather}
\Omega=\sum_{j=1}^{\infty} (-1)^{j-1}\Gamma^{* j},
\end{gather}
where $\Gamma^{* j}$ means the $j$-fold convolution. 
Then,
\[
X(t)=Y(t)+\sum_{j=1}^{\infty}(-1)^j \int_0^t \Gamma^{* j}(t-s)Y(s)\,ds.
\]
This series expansion is reminiscent of the Dyson series \cite{goldberger2004collision,tully1990molecular,lu2017path}. Intuitively, the $j$th term in the series means the contribution of the fluctuation after $j$ times of interaction.

\section{Conclusion and discussion}\label{sec:dis}
We have shown that in the interacting particle systems, if interaction potential is positive definite, the fluctuation of empirical measure is suppressed to be smaller compared with the fluctuation in Monte Carlo sampling (or in the mean field dynamics) while the systems with negative definite potentials tend to exhibit larger fluctuation. 

For the space homogeneous systems on torus, we performed the explicit comparisons including pointwise (in time) estimate and the estimate of long time behaviors. As temperature goes to zero, the long time fluctuation goes to zero for positive definite interaction while the  fluctuation goes to infinity for negative definite interaction. 
In the general systems, we have analyzed the time average of the fluctuation.

Future topics include the pointwise comparison in time and the asymptotic behavior of fluctuations for $t\to\infty$ for the general systems. More interestingly, we expect that in the zero temperature limit (as $\beta\to\infty$), the fluctuation suppression and enhancement phenomena could be put to the extreme. Besides, the analysis of fluctuation in more general situations, such as not under the thermal equilibrium, will be a much more significant and interesting problem for the future.  

It is remarked that the phenomena of fluctuation suppression and enhancement may help to gain deeper understanding to some physical systems like the Poisson-Boltzmann system \cite{guo2003vlasov}. Moreover, it may help to understand the properties of some particle based variational inference sampling methods (e.g. Kernel stein discrepancy descent in \cite{korba2021kernel}). The sampling methods based on positive definite interaction potentials may give smaller fluctuation and variance compared to standard Monte Carlo sampling, which means better sampling properties.

\section*{Acknowledgement}

This work is partially supported by the National Key R\&D Program of China, Project Number 2021YFA1002800. The work of L. Li was partially sponsored by the Strategic Priority Research Program of Chinese Academy of Sciences, Grant No. XDA25010403,  NSFC 11901389, 12031013, and  Shanghai Science and Technology Commission Grant No. 20JC144100, 21JC1403700.

\appendix

\section{The minimizer of free energy functional \eqref{eq:freeenergy}}\label{app:F_minimizer}

\begin{lemma}\label{lmm:minimizerFP}
The minimizer of the free energy functional
\begin{gather*}
\begin{aligned}
F(\mu) &= E(\mu)+\beta^{-1}H(\mu) \\
&=\frac{1}{2}\iint_{\bfX\times \bfX} W(x-y)\mu(dx)\mu(dy)+\int_{\bfX} V(x)\mu(dx)+\beta^{-1}H(\mu)
\end{aligned}
\end{gather*}
satisfies
\[
\mu_*(x)=Z^{-1}\mathrm{exp}(-\beta U(x;\mu_*))\,dx,
\]
where $U(x;\mu_*):=W*\mu_*(x)+V(x)$ and $Z^{-1}$ is the normalization constant.
Consequently, $\mu_*$ is a stationary solution of the nonlinear Fokker-Planck equation 
\eqref{eq:nonfp}.
\end{lemma}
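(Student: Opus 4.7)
The plan is to derive the stated form of $\mu_*$ by computing the first variation of $F$ subject to the unit-mass constraint, and then verify by direct substitution that the resulting density annihilates the right-hand side of the nonlinear Fokker-Planck equation \eqref{eq:nonfp}.

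First I would restrict attention to $\mu(dx)=\rho(x)\,dx$ with $\rho\ge 0$ and $\int\rho\,dx=1$ (if $\mu$ is not absolutely continuous then $H(\mu)=+\infty$, so a minimizer must be absolutely continuous). Consider a perturbation $\rho_\varepsilon=\rho_*+\varepsilon\psi$ where $\psi$ is a smooth, compactly supported test function with $\int\psi\,dx=0$ (mass-preservation). Using the symmetry $W(x-y)=W(y-x)$, the Gateaux derivative of $E$ at $\mu_*$ in direction $\psi$ is
\[
\frac{d}{d\varepsilon}\bigg|_{\varepsilon=0}E(\rho_\varepsilon)=\int_{\bfX}\bigl(W*\rho_*(x)+V(x)\bigr)\psi(x)\,dx=\int_{\bfX}U(x;\mu_*)\psi(x)\,dx,
\]
while the derivative of $H$ is $\int(\log\rho_*+1)\psi\,dx$. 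Setting the first variation of $F$ to zero for all such $\psi$, and incorporating a Lagrange multiplier $\lambda$ for the mass constraint, yields the Euler-Lagrange equation $U(x;\mu_*)+\beta^{-1}(\log\rho_*+1)=\lambda$ pointwise on $\{\rho_*>0\}$. Solving for $\rho_*$ gives $\rho_*(x)=Z^{-1}\exp(-\beta U(x;\mu_*))$, where $Z=\exp(\beta\lambda-1)$ is fixed by normalization; in particular $\rho_*>0$ everywhere so the equation holds on all of $\bfX$.

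For the second claim, I would simply plug $\rho_*$ into the right-hand side of \eqref{eq:nonfp}. From $\rho_*\propto e^{-\beta U}$ one obtains $\nabla\rho_*=-\beta\rho_*\nabla U$, so
\[
\beta^{-1}\Delta\rho_*=\beta^{-1}\nabla\cdot(\nabla\rho_*)=-\nabla\cdot(\rho_*\nabla U)=-\nabla\cdot\bigl((\nabla V+\nabla W*\mu_*)\rho_*\bigr),
\]
which cancels exactly the drift term. Hence $\partial_t\mu_*=0$ and $\mu_*$ is a stationary solution.

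The main obstacle, in my view, is not the formal computation above but the justification of the variational argument itself: existence of a minimizer and admissibility of compactly supported perturbations (so that differentiation under the integral is valid and the logarithm is well-defined, which is where showing $\rho_*>0$ a.e.\ matters). On $\T^d$ and under standard confinement/growth assumptions on $V$ and $W$ on $\R^d$, existence follows by the direct method from lower semicontinuity and coercivity of $F$, citing \cite{carrillo2003kinetic,guillin2019uniform} as the excerpt already does; uniqueness is a by-product of strict convexity of $H$ together with (semi-)convexity assumptions on $E$. I would state these assumptions once at the outset, defer the variational regularity discussion to those references, and present the Euler-Lagrange derivation and the Fokker-Planck verification as the main content of the proof.
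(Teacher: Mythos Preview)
Your approach is the same as the paper's in spirit (first variation plus Lagrange multiplier, then direct verification in \eqref{eq:nonfp}), but there is a genuine logical gap in the sentence ``in particular $\rho_*>0$ everywhere so the equation holds on all of $\bfX$.'' The Euler--Lagrange identity $U+\beta^{-1}(\log\rho_*+1)=\lambda$ is derived only on $\{\rho_*>0\}$, so the formula $\rho_*=Z^{-1}e^{-\beta U}$ is known only there; you cannot bootstrap from that formula to conclude positivity on the complement. Relatedly, your perturbation $\rho_*+\varepsilon\psi$ need not be admissible: if $\psi<0$ somewhere that $\rho_*$ is arbitrarily small, the perturbed density can fail to be nonnegative for every $\varepsilon>0$, so the one-sided derivative at $\varepsilon=0$ is not available there.

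The paper closes exactly this gap, and this is really the content of its proof. First it shows $\rho_*>0$ a.e.\ by a separate competitor argument: if $\rho_*$ vanished on a set $\Omega_0$ of positive measure, the convex combination $\rho_\varepsilon=(1-\varepsilon)\rho_*+\varepsilon\, m(\Omega_0)^{-1}1_{\Omega_0}$ would have $E(\rho_\varepsilon)=E(\rho_*)+O(\varepsilon)$ but $H(\rho_\varepsilon)-H(\rho_*)$ contains an $\varepsilon\log\varepsilon$ term, which beats any linear cost and contradicts minimality. Second, to make the variation rigorous it restricts test functions to $\Gamma_{\varepsilon_0}=\{\rho_*\ge\varepsilon_0\}$, where $\rho_*+\varepsilon v$ stays nonnegative for $|\varepsilon|<\varepsilon_0$; this yields the Euler--Lagrange identity on each $\Gamma_{\varepsilon_0}$ with a constant independent of $\varepsilon_0$, and letting $\varepsilon_0\downarrow 0$ (using the positivity just proved) gives it a.e. Your final paragraph flags these issues but outsources them to \cite{carrillo2003kinetic,guillin2019uniform}; since the paper actually supplies the short self-contained argument, you should include it rather than defer.
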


Before we present the proof, let us derive this formally, using the standard KKT conditions \cite{boyd2004convex}, for the density $\rho_*=d\mu_*/dx$.
For the minimizer of $F(\rho)$, using the KKT for $\rho\ge 0$ and $\int \rho dx=1$, one has the Lagrangian:
\[
\cL(\rho, s, \lambda) = F(\rho)-\int s(x)\rho(x)dx+\lambda\left(\int \rho dx-1 \right).
\]
 The KKT conditions give
\[
W*\rho_* + V+\beta^{-1}(\mathrm{log}\rho_*+1)-s(x)+\lambda=0, \quad s(x)\rho_*(x)=0,\quad s\ge 0.
\]
If $\rho_*(x)$ is zero somewhere, then the left hand side would be $-\infty$.
Hence, one expects that $s\equiv 0$ and $\rho_*>0$.  The result then follows. 
Below, we justify this. 

\begin{proof}
For the energy to be finite $\mu_*$ must have a density $\rho_*$ as the entropy is finite.

We first show that $\mu_*$ is positive almost everywhere (with respect to Lebesgue measure $m$). If not, there is a set $\Omega_0$ such that 
\[
\mu_*(\Omega_0)=0,\quad m(\Omega_0)>0.
\]
One can take $\Omega_0$ such that $m(\Omega_0)<\infty$. Consider 
\[
\rho_\e=(1-\e)\rho_*+\frac{\varepsilon}{m(\Omega_0)}1_{\Omega_0}.
\]
Below, we identify $\rho_\e$ with the corresponding measure $\mu_{\e}(dx)=\rho_{\e}\,dx$.
One may check that $E(\rho_{\e})$ is Lipschitz in $\e$. However, for the entropy, it is 
\[
H(\rho_\e)=
(1-\e)\int \rho \mathrm{log}\rho dx+(1-\e)\mathrm{log}(1-\e)+\e\int_{\Omega_0}\frac{1}{m(\Omega_0)}\mathrm{log}\frac{\e}{m(\Omega_0)}dx.
\]
Since there is $\e \mathrm{log}\e$, one has $\e\log \e+C\e<0$ for any $C>0$ when $\e$ is small enough. Thus, one must have $F(\rho_{\e})<F(\rho_*)$ for some $\e$ small. This is a contradiction.

Next, we check that there exists $\gamma(\beta ; \mu_*)$ such that
\[
U(x;\mu_*)+\beta^{-1}\mathrm{log}\rho_*(x)=\gamma(\beta; \mu_*).
\]
To see this, consider the set
\[
\Gamma_{\varepsilon_0}:=\{x:\rho_*(x)\ge \varepsilon_0\}.
\]
For some $\varepsilon_0$ small enough, $m(\Gamma_{\varepsilon_0})>0$. For bounded measurable function $v$ with compact support $\mathrm{supp}(v)\subset\Gamma_0$ satisfying $\int v dx=0$, $\rho_*+\e v$ is still a probability density for $|\e|<\varepsilon_0$. Hence (by identifying $\rho_*+\e v$ with $\mu_*+\e v$)
\[
\frac{d}{d\e}F(\rho_*+\e v)|_{\e=0}=0.
\]
That is
\[
\int (U(x;\mu_*)+\beta^{-1}\mathrm{log}\rho_*(x))v dx=0.
\]
Hence, $U(x;\mu_*)+\beta^{-1}\mathrm{log}\rho_*(x)$ is a constant on $\Gamma_{\varepsilon_0}$ (even if the set is is disconnected, the constant should be the same). Since $\varepsilon_0$ is arbitrary and $\Gamma_{\varepsilon_0}$ is monotone in $\varepsilon_0$, the claim then follows. 
\end{proof}

\bibliographystyle{plain}
\bibliography{meanfield}

\end{document}